\title{On the separation of regularity properties of the reals}
\author{Giorgio Laguzzi}
\newtheorem{definition2}{Definition}
\newtheorem{fact}[definition2]{Fact}
\newtheorem{lemma}[definition2]{Lemma}
\newtheorem{remark2}[definition2]{Remark}
\newtheorem{theorem}[definition2]{Theorem}
\newtheorem*{Mobservation2}{Main Observation}
\newtheorem{example2}[definition2]{Example}
\newtheorem{corollary}[definition2]{Corollary}
\newenvironment{definition}{\begin{definition2} \upshape}{\end{definition2}}
\newenvironment{remark}{\begin{remark2} \upshape}{\end{remark2}}
\newcommand{\algebra}{\textsc}
\newcommand{\algebraQ}{\algebra{B}}
\newcommand{\amal}{\textsf{Am}}
\newcommand{\asilver}{\poset{VT}}
\newcommand{\baire}{\omega^\omega}
\newcommand{\bairefin}{\omega^{< \omega}}
\newcommand{\bp}{\textbf{all}(\textsc{Baire})}
\newcommand{\cantor}{2^\omega}
\newcommand{\cantorfin}{2^{< \omega}}
\newcommand{\cohen}{\poset{C}}
\newcommand{\conc}{\smallfrown}
\newcommand{\DDelta}{\mathbf{\Delta}}
\newcommand{\dominating}{\poset{D}}
\newcommand{\enfa}{\textit}
\newcommand{\ifif}{\Leftrightarrow}
\newcommand{\force}{\Vdash}
\newcommand{\height}{\text{ht}}
\newcommand{\laver}{\poset{L}}
\newcommand{\lm}{\textbf{all}(\textsc{Lebesgue}) }
\newcommand{\miller}{\poset{M}}
\newcommand{\mm}{\textbf{all}(\textsc{Miller})}
\newcommand{\model}{\textsc}
\newcommand{\poset}{\mathbf}
\newcommand{\random}{\poset{B}}
\newcommand{\real}{\baire}
\newcommand{\restric}{{\upharpoonright}}
\newcommand{\sacks}{\poset{S}}
\newcommand{\silver}{\poset{V}}
\newcommand{\sm}{\textbf{all}(\textsc{Sacks})}
\newcommand{\SSigma}{\mathbf{\Sigma}}
\newcommand{\splitting}{\textsc{Split}}
\newcommand{\stem}{\textsc{Stem}}
\newcommand{\successor}{\textsc{Succ}}
\newcommand{\term}{\textsc{Term}}
\newcommand{\vm}{\textbf{all}(\textsc{Silver})}
\newcommand{\0}{\mathbf{0}}
\begin{document}

\maketitle
\begin{abstract}
We present a model where $\omega_1$ is inaccessible by reals, Silver measurability holds for all sets but Miller and Lebesgue measurability fail for some sets. This contributes to a line of research started by Shelah in the 1980s and more recently continued by Schrittesser and Friedman (see \cite{S10}), regarding the separation of different notions of regularity properties of the real line.
\end{abstract}

\paragraph{Acknowledgement.} This work constitutes a part of my doctoral dissertation, written  at the Kurt G\"odel Research Center, University of Vienna, under the supervision of Sy Friedman. I am extremely in debt with him, for many enlightening suggestions which greatly improved the writing of the essay. Furthermore I would like to thank the FWF (Austrian Science Fund) for its indispensable support through the research project \#P22430-N13.
\section{Introduction}
The Lebesgue measurability and the Baire property are certainly the most common notions of regularity of the reals. 
The following results concerning the 2nd level of the projective hierarchy are nowadays part of  the folklore.
\begin{theorem}[Solovay,\cite{So70}]\ \label{solovay-char}
\begin{enumerate}
 \item[(i)] $\SSigma^1_2(\textsc{Lebesgue})$ iff $\forall x \in \baire (\text{random reals over $\model{L}[x]$ form a co-null set})$;
 \item[(ii)] $\SSigma^1_2(\textsc{Baire})$ iff  $\forall x \in \baire (\text{Cohen reals over $\cohen(\model{L}[x])$ form a comeager set})$.
\end{enumerate}
\end{theorem}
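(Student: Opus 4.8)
The plan is to prove (i) and (ii) in parallel, since both are instances of a single scheme relating $\SSigma^1_2$-regularity with respect to an idealized forcing to the size of the set of generic reals over the inner models $\model{L}[x]$: (i) is the instance for random forcing $\random$ and the null ideal $\nullo$, while (ii) is the instance for Cohen forcing $\cohen$ and the meager ideal $\meager$. I would first isolate a Borel-approximation lemma: for every $\SSigma^1_2(a)$ set $A$ there is a Borel set $B$ with code in $\model{L}[a]$ such that, for every $r$ random (resp.\ Cohen) over $\model{L}[a]$, one has $r \in A \iff r \in B$; equivalently $A \triangle B$ is contained in the set $N_a$ of non-generic reals over $\model{L}[a]$. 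The set $B$ is read off the forcing relation computed inside $\model{L}[a]$: writing $\dot r$ for the canonical name of the generic real, let $B$ be the Borel set coding the condition $\force \dot r \in \dot A$, which by the truth lemma is exactly $\{r : \model{L}[a][r] \models r \in A\}$. The equivalence with membership in $A$ computed in $V$ is then supplied by Shoenfield absoluteness, which makes the $\SSigma^1_2$ (and $\PPi^1_2$) predicate ``$r \in A$'' absolute between $\model{L}[a][r]$ and $V$; here one uses that the Shoenfield tree lies in $\model{L}[a]$ and that well-foundedness is absolute.

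Granting the lemma, the direction $(\Leftarrow)$ is immediate: if the generic reals over $\model{L}[a]$ form a co-null (resp.\ comeager) set, then $N_a$ is null (resp.\ meager), so $A = B \triangle (A \triangle B)$ differs from the Borel set $B$ by a small set and is therefore Lebesgue measurable (resp.\ has the Baire property). Since $a$ was an arbitrary parameter, every $\SSigma^1_2$ set is regular.

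For the direction $(\Rightarrow)$ I would argue contrapositively, and this is where I expect the main difficulty. Suppose for some $a$ the generic reals over $\model{L}[a]$ fail to be co-null (resp.\ comeager); equivalently $N_a$ is non-null (resp.\ non-meager). First observe that $N_a$ is itself $\SSigma^1_2(a)$: unwinding the definition, $x \in N_a$ asserts the existence of a code $c \in \model{L}[a]$ for a null (resp.\ meager) Borel set containing $x$, and the clause ``$c \in \model{L}[a]$'' is uniformized by the $\SSigma^1_2(a)$-good well-ordering $<_{\model{L}[a]}$ of the reals of $\model{L}[a]$. Assuming $\SSigma^1_2$-regularity for contradiction, $N_a$ is then measurable (resp.\ has the Baire property). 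The crux is a zero-one law: since $N_a$ is invariant under the dense group of translations by reals of $\model{L}[a]$, a measurable (resp.\ Baire) $N_a$ must be either null/meager or co-null/comeager. Ruling out the first case (it contradicts our assumption) forces $N_a$ to be co-null/comeager, i.e.\ the generic reals to be null/meager.

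It then remains to manufacture an explicit non-regular $\SSigma^1_2$ set in this situation, contradicting the regularity hypothesis. This is the classical step: when the random (resp.\ Cohen) reals over $\model{L}[a]$ are null (resp.\ meager), the reals of $\model{L}[a]$ themselves form a non-small set carrying, via the $\DDelta^1_2(a)$ well-ordering $<_{\model{L}[a]}$, a definable structure, and the usual Sierpi\'nski-type construction produces a $\SSigma^1_2(a)$ set whose inner and outer measure (resp.\ category) disagree. Assembling these pieces yields the contrapositive, completing (i); the argument for (ii) is verbatim the category translation, with $\random$, $\nullo$ and ``co-null'' replaced by $\cohen$, $\meager$ and ``comeager''. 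The genuinely delicate points, and the ones I would spend most care on, are the exact formulation of the zero-one law and the verification that the Sierpi\'nski-type set is truly non-regular.
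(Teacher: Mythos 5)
Note first that the paper does not prove Theorem \ref{solovay-char} at all: it is quoted from Solovay \cite{So70} as folklore, so your attempt can only be compared with the classical proof. Your direction ($\Leftarrow$) is exactly that classical argument: the Boolean value of the $\SSigma^1_2$ formula, computed in the measure (resp.\ category) algebra of $\model{L}[a]$, is a Borel set $B$ coded in $\model{L}[a]$, the truth lemma plus Shoenfield absoluteness give $A \triangle B \subseteq N_a$, and the hypothesis makes $N_a$ small. Likewise, the first steps of your direction ($\Rightarrow$) are correct and standard: $N_a$ is $\SSigma^1_2(a)$, and the translation zero-one law reduces everything to the case in which the random (resp.\ Cohen) reals over $\model{L}[a]$ form a null (resp.\ meager) set.

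The genuine gap is in your last step. From ``the random reals over $\model{L}[a]$ are null'' you infer that ``the reals of $\model{L}[a]$ themselves form a non-small set'' and then run a Sierpi\'nski-type construction on them. That inference is false. In the Cohen model (add $\aleph_2$ Cohen reals over $\model{L}$), Cohen forcing adds no random reals, so the set of reals random over $\model{L}$ is empty, hence null; but in that model $\textrm{non}(\nullo)=\mathfrak{c}=\aleph_2$, so the reals of $\model{L}$, having cardinality $\aleph_1$, form a \emph{null} set. In this situation every subset of $\cantor \cap \model{L}[a]$ is null, hence Lebesgue measurable, so no Sierpi\'nski set built inside $\model{L}[a]$ can witness non-measurability --- yet the theorem requires a non-measurable $\SSigma^1_2$ set in exactly this situation. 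The classical repair (Solovay, and Judah--Shelah) does not use the ground-model reals: one covers the null set of random reals by a Borel null set coded by a single real $b$, so that over $\model{L}[a,b]$ there are \emph{no} random reals at all; then $\cantor=\bigcup_{\alpha<\omega_1}A_\alpha$ is an increasing union of null Borel sets coded in $\model{L}[a,b]$ (countably many codes cannot cover $\cantor$, so $\omega_1^{\model{L}[a,b]}=\omega_1$), and one defines the rank $f(y)=$ least $\alpha$ with $y\in A_\alpha$. Using $\SSigma^1_2$-goodness of the canonical well-ordering (bounded quantification over $<_{\model{L}[a,b]}$-initial segments preserves $\SSigma^1_2$), the comparison relation $\{(y,y'): f(y)\le f(y')\}$ is $\SSigma^1_2(a,b)$; all of its sections and all sections of its complement are null, so Fubini shows it cannot be measurable. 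The same rank-and-Fubini argument, with $\meager$ in place of $\nullo$, completes the category case; without it, the hard direction of both (i) and (ii) remains unproved.
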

\begin{theorem}[Shelah-Judah,\cite{SJ89}]\ \label{shelah-char}
\begin{enumerate}
 \item[(i)] $\DDelta^1_2(\textsc{Lebesgue})$ iff  $\forall x \in \baire \exists z \in \cantor ( z \text{ random over $\model{L}[x]$})$;
 \item[(ii)] $\DDelta^1_2(\textsc{Baire})$ iff  $\forall x \in \baire \exists z \in \cantor ( z \text{ Cohen over $\model{L}[x]$})$.
\end{enumerate}
\end{theorem}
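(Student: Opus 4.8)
The plan is to prove (i); part (ii) is entirely dual, replacing random forcing $\random$ by Cohen forcing $\cohen$, the null ideal by the meager ideal, measure isomorphisms by category isomorphisms, and invoking Theorem~\ref{solovay-char}(ii) in place of (i). Throughout I reduce to a single real parameter: fix $a\in\baire$ and a set $A$ that is $\DDelta^1_2(a)$, so $A=\{z:\exists y\,\varphi(z,y,a)\}=\{z:\forall y\,\psi(z,y,a)\}$ with $\varphi,\psi$ arithmetic (equivalently $A$ is simultaneously $\SSigma^1_2(a)$ and $\PPi^1_2(a)$). The engine on both sides is the Solovay Borel approximation: for a $\SSigma^1_2(a)$ set $S$ one forms the Borel set $U_S=\bigcup\{[p]:p\in\random^{\model{L}[a]},\ p\force^{\model{L}[a]}_{\random}\dot z\in S\}$, whose code lies in $\model{L}[a]$ because the forcing relation for $\SSigma^1_2$ statements is definable there; by Shoenfield absoluteness applied inside $\model{L}[a][z]$ one obtains $S\cap R_a=U_S\cap R_a$, where $R_a$ denotes the reals random over $\model{L}[a]$. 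This is exactly the computation underlying Theorem~\ref{solovay-char}(i), which I would quote at this level.

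For the direction $(\Leftarrow)$ assume every $\model{L}[x]$ admits a random real and take $A$ as above. Applying the approximation to the two $\SSigma^1_2(a)$ sets $A$ and $\cantor\setminus A$ yields Borel sets $U_A,U_{A^c}$ coded in $\model{L}[a]$ with $A\triangle U_A\subseteq NR_a$ and $(\cantor\setminus A)\triangle U_{A^c}\subseteq NR_a$, where $NR_a=\cantor\setminus R_a$. Hence on $R_a$ the sets $U_A,U_{A^c}$ partition $\cantor$, so both $U_A\cap U_{A^c}$ and $\cantor\setminus(U_A\cup U_{A^c})$ are Borel sets coded in $\model{L}[a]$ and contained in $NR_a$. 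The key measure-theoretic move is a transfer lemma: if $P$ is a positive-measure Borel set coded in $\model{L}[a]$ there is, inside $\model{L}[a]$, a measure isomorphism $\cantor\to P$; since it and its inverse are coded in $\model{L}[a]$ they send $\model{L}[a]$-coded null sets to $\model{L}[a]$-coded null sets, hence carry $R_a$ into $R_a$. Thus the mere existence of one random real over $\model{L}[a]$ already yields a random real inside every positive-measure $\model{L}[a]$-coded set, so no such set can be a subset of $NR_a$. Consequently $U_A\cap U_{A^c}$ and $\cantor\setminus(U_A\cup U_{A^c})$ are null, i.e.\ $U_A=^*\cantor\setminus U_{A^c}$ is a genuine Borel approximation.

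The main obstacle is the final step, concluding $A=^*U_A$. All that is directly available is $A\triangle U_A\subseteq NR_a$, and $NR_a$ need not be null — this is precisely where $\DDelta^1_2$-measurability is allowed to hold while $\SSigma^1_2$-measurability (Theorem~\ref{solovay-char}(i), i.e.\ $NR_a$ null) fails. The resolution uses the \emph{global} hypothesis rather than its instance at $a$: the difference $D=A\triangle U_A$ is again $\DDelta^1_2(a)$ and $D\subseteq NR_a$, and if $D$ had positive outer measure one could fix a positive-measure Borel $P$ on which $D$ has outer measure $1$ and inner measure $0$, absorb a code $c_P$ into the parameter, and run the approximation over $\model{L}[\langle a,c_P\rangle]$ together with the transfer lemma to produce a real $b$ over whose $\model{L}[b]$ no random real exists on a condition, contradicting the hypothesis at $b$. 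I expect verifying this reflection, together with the bookkeeping of which Shoenfield direction applies to $\varphi$ versus $\psi$ so that the two approximations fit, to be the genuinely delicate part.

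For $(\Rightarrow)$ I argue by contraposition: suppose $NR_a=\cantor$ for some $a$, so no real is random over $\model{L}[a]$ and the $\model{L}[a]$-coded null sets cover $\cantor$. As a countable family of them cannot, necessarily $\omega_1^{\model{L}[a]}=\omega_1$, and the canonical $\SSigma^1_2(a)$ wellorder of the reals of $\model{L}[a]$ enumerates null codes $\langle c_\xi:\xi<\omega_1\rangle$ whose sets $N_\xi=\bigcup_{\eta<\xi}B_{c_\eta}$ form an increasing $\omega_1$-chain of null sets with union $\cantor$. From such a definable null tower one extracts a Sierpiński-type non-measurable set — for instance a $\DDelta^1_2(a)$ set meeting each $N_\xi$ and its complement in a set of inner measure $0$ — which is the required non-measurable $\DDelta^1_2$ set. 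Here the crux is the complexity count: membership $z\in N_\xi$ is $\SSigma^1_2(a)$, and it is the $\PPi^1_2(a)$ side, available because the tower and the choice of indices are read off the $\SSigma^1_2$-good wellorder of $\model{L}[a]$, that upgrades the construction to $\DDelta^1_2$. Part (ii) then follows by the category–measure dictionary, using Cohen reals, meager covers, the $\SSigma^1_2$ forcing relation for $\cohen$, and Theorem~\ref{solovay-char}(ii).
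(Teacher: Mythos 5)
Note first that the paper does not prove this theorem at all: it is quoted from \cite{SJ89} as background, so your proposal can only be measured against the standard Judah--Shelah argument. Your direction $(\Rightarrow)$ (contraposition, the definable null tower, and a Sierpi\'nski-type set made non-measurable by a Fubini argument on the induced prewellordering) is essentially the standard proof, and your transfer lemma and the nullity of $U_A\cap U_{A^c}$ and of $\cantor\setminus(U_A\cup U_{A^c})$ are also correct and standard. The genuine gap is exactly the step you yourself flag as delicate, and your proposed resolution does not close it. From ``$D=A\triangle U_A$ has full outer measure and inner measure $0$ in a positive Borel set $P$ coded in $\model{L}[b]$, $b=\langle a,c_P\rangle$'' you want to produce a contradiction of the form ``no random real over $\model{L}[b]$ exists on a condition''. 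But that does not follow: $D$ is an arbitrary (non-Borel) set trapped inside the non-randoms, and it need not contain any positive-measure Borel set coded in $\model{L}[b]$; what follows is only that the randoms over $\model{L}[b]$ inside $P$ have \emph{inner} measure $0$ relative to $P$, i.e.\ no positive Borel subset of $P$ consists entirely of randoms. That is perfectly compatible with your hypothesis, which (via the transfer lemma) guarantees only that the randoms \emph{meet} every positive coded set. Indeed, in the Judah--Shelah model of $\DDelta^1_2(\textsc{Lebesgue})\wedge\neg\SSigma^1_2(\textsc{Lebesgue})$ precisely this configuration occurs for a suitable parameter, so no amount of parameter absorption can turn it into a contradiction; re-running the approximation over $\model{L}[a,c_P]$ merely reproduces $U_A$ modulo a null set and shrinks the set of randoms further, never trapping $D$ inside a coded null set.

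What the actual proof needs, and what is missing from your outline, is a lemma that produces coded Borel sets \emph{genuinely contained in} $A$ and in its complement, rather than sets that merely agree with $A$ on the randoms. The key ingredient is the measure-theoretic analogue of the Mansfield--Solovay theorem: if a $\SSigma^1_2(a)$ set (a projection $p[T]$ of a tree $T\in\model{L}[a]$) contains a random real over $\model{L}[a]$, then it contains a positive-measure Borel set with code in $\model{L}[a]$. (One proves this by forcing with the measure algebra of $\model{L}[a]$ below a condition forcing the generic into $p[T]$, and converting the name for a branch of $T$ into a Borel selection defined almost everywhere on that condition.) Granted this, the argument is a density-plus-exhaustion argument inside $\model{L}[a]$: every positive coded $P$ contains a random $z$; $z$ lies in $A$ or in $\cantor\setminus A$, both $\SSigma^1_2(a)$, so $P$ contains a positive coded Borel subset of $A$ or of $\cantor\setminus A$; a maximal antichain of such sets (countable by ccc, and computable in $\model{L}[a]$ since the relevant statements are $\PPi^1_2$ and hence absolute) yields coded Borel sets $B^*\subseteq A$ and $C^*\subseteq\cantor\setminus A$ with $B^*\cup C^*$ co-null, whence $A$ is measurable. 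Your Boolean-value approximation can never supply such genuine subsets, which is why your final step stalls; the rest of your outline (and its dualization to Cohen forcing for part (ii)) is fine.
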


Other important notions of regularity, which have been more recently studied by Brendle, L\"{o}we and Halbeisen, are Sacks-, Miller-, Silver- and Laver-measurability
(we will denote such forcings with the usual letters $\sacks$, $\miller$, $\silver$, $\laver$, and they will be recalled in the next section).
\begin{definition} \label{def:P-measurable}
Let $\poset{P}$ be one among $\sacks$, $\miller$, $\silver$, $\laver$. A set of reals $X$ is said to be $\poset{P}$-measurable iff
\[
 \forall T \in \poset{P} \exists T' \leq T, T' \in \poset{P} ([T'] \subseteq X \vee [T'] \cap X = \emptyset).
\]
We will often refer to such notions by saying Silver measurable, Miller measurable and so on.
\end{definition}
A very detailed work concerning a general approach to these notions of regularity may be found in \cite{K12}, chapter 2, and \cite{Ik10}.
For these properties, one can prove characterizations like in theorem \ref{solovay-char} and \ref{shelah-char}. The following results are due to Brendle, L\"{o}we and Halbeisen.
\begin{theorem}[Brendle-L\"{o}we, \cite{BL99} and Brendle-L\"{o}we-Halbeisen, \cite{BLH05}]\ \label{brendlelowe-char}
\begin{itemize}
 \item[(i)] $\DDelta^1_2(\textsc{Sacks})$ iff $\SSigma^1_2(\textsc{Sacks})$ iff $\forall x \in \baire (\baire \cap \model{L}[x] \neq \baire)$;
 \item[(ii)] $\DDelta^1_2(\textsc{Miller})$ iff $\SSigma^1_2(\textsc{Miller})$ iff $\forall x \in \baire (\baire \cap \model{L}[x] \text{ is not dominating})$;
 \item[(iii)] $\DDelta^1_2(\textsc{Laver})$ iff $\SSigma^1_2(\textsc{Laver})$ iff $\forall x \in \baire (\baire \cap \model{L}[x] \text{ is bounded})$;
 \item[(iv)] $\DDelta^1_2(\textsc{Silver})$ implies $\forall x \in \baire \exists z \in \cantor (z \text{ is splitting over } \model{L}[x])$ \\
$\SSigma^1_2(\textsc{Silver})$ implies  $\forall x \in \baire (\baire \cap \model{L}[x] \text{ is not dominating})$.
\end{itemize}
\end{theorem}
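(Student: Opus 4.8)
The plan is to prove (i)--(iii) by closing, for each $\poset{P}\in\{\sacks,\miller,\laver\}$, a cycle of implications, and to read off the two one-way statements of (iv) from the parts of that cycle that survive for \silver. Write $\Phi_{\poset{P}}(x)$ for the transcendence clause attached to $\poset{P}$: ``$\baire\cap\model{L}[x]\neq\baire$'' for \sacks, ``$\baire\cap\model{L}[x]$ is not dominating'' for \miller, ``$\baire\cap\model{L}[x]$ is bounded'' for \laver. Since every $\DDelta^1_2$ set is a fortiori $\SSigma^1_2$, the implication $\SSigma^1_2(\poset{P})\Rightarrow\DDelta^1_2(\poset{P})$ is free, and the theorem reduces to the two implications $\DDelta^1_2(\poset{P})\Rightarrow\forall x\,\Phi_{\poset{P}}(x)$ and $\forall x\,\Phi_{\poset{P}}(x)\Rightarrow\SSigma^1_2(\poset{P})$.

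The engine is a Key Lemma tying $\Phi_{\poset{P}}$ to genericity: $\Phi_{\poset{P}}(x)$ holds iff $V$ contains a real meeting every dense subset of $\poset{P}$ that lies in $\model{L}[x]$ --- concretely a new real for \sacks, an unbounded real for \miller, a dominating real for \laver. The forcing half is immediate; the substantive half manufactures such a real from the bare transcendent real provided by $\Phi_{\poset{P}}(x)$, using that $\baire\cap\model{L}[x]$ is $\SSigma^1_2(x)$ and carries the canonical wellordering, so that the relevant dense sets can be met even though $\model{L}[x]$ may be uncountable. Crucially, the full hypothesis $\forall x\,\Phi_{\poset{P}}(x)$ buys more than one generic: invoking it at reals coding the finite approximations produced so far, one can run a fusion and obtain a whole perfect set of such generics below any prescribed condition.

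For $\forall x\,\Phi_{\poset{P}}(x)\Rightarrow\SSigma^1_2(\poset{P})$: given a $\SSigma^1_2$ set $A$ with parameter $a$ and a condition $T\in\poset{P}$, choose $x$ coding $a$ and $T$, get $g\in[T]$ generic over $\model{L}[x]$, let $T'\leq T$ in $\poset{P}^{\model{L}[x]}$ decide ``$\dot g\in A$'', and transfer the decision to $V$ by Shoenfield absoluteness between $\model{L}[x][g]$ and $V$. The delicate point, which I expect to be the main obstacle, is to pass from ``every generic below $T'$ lies in (resp.\ avoids) $A$'' to the geometric ``$[T']\subseteq A$ or $[T']\cap A=\emptyset$''. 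For \sacks\ this is clean via the Mansfield--Solovay dichotomy: $A\cap[T]$ either contains a perfect set, giving directly $[T']\subseteq A$, or satisfies $A\cap[T]\subseteq\model{L}[a]$, and then the perfect set of generics from the previous paragraph (built using $\forall x\,\Phi_{\sacks}(x)$) yields a perfect $T'\leq T$ with $[T']\cap\model{L}[a]=\emptyset$, whence $[T']\cap A=\emptyset$. The \miller\ and \laver\ cases follow the same scheme with the eventual-domination ordering replacing perfect-set containment, the associated ideal $I_{\poset{P}}$ of \cite{Ik10} packaging the ``co-small set of generics'' step.

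For $\DDelta^1_2(\poset{P})\Rightarrow\forall x\,\Phi_{\poset{P}}(x)$ I argue contrapositively: if $\Phi_{\poset{P}}(x)$ fails for some $x$, then $\baire\cap\model{L}[x]$ is respectively all of $\baire$, a dominating family, or an unbounded family, and the wellordering $<_{\model{L}[x]}$ (which is $\SSigma^1_2$ in $x$) produces a $\DDelta^1_2(x)$ set --- of Bernstein type for \sacks, a dominating/ladder set for \miller\ and \laver --- that meets and avoids $[T]$ inside every condition $T$, hence is not $\poset{P}$-measurable. Finally (iv): a \silver-generic real is simultaneously splitting and unbounded over the ground model, so the contrapositive construction gives at once ``$\DDelta^1_2(\silver)\Rightarrow\forall x\,\exists z$ splitting over $\model{L}[x]$'' and ``$\SSigma^1_2(\silver)\Rightarrow\forall x(\baire\cap\model{L}[x]$ not dominating$)$''. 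The equivalences fail for \silver\ precisely because the substantive half of the Key Lemma does not hold --- a splitting or unbounded real over $\model{L}[x]$ cannot in general be refined to a \silver-generic one --- so the converse direction producing $\SSigma^1_2(\silver)$ is simply unavailable, which is why (iv) records only implications.
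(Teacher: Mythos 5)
This theorem is not proved in the paper at all --- it is quoted from \cite{BL99} and \cite{BLH05} --- so your proposal can only be measured against the proofs in those papers, and doing so reveals two genuine errors, both in steps you yourself flag as load-bearing.

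First, your Key Lemma is false. Transcendence does not yield genericity: let $c$ be a single Cohen real over $\model{L}$. In $\model{L}[c]$ there are non-constructible reals, so your $\Phi_{\sacks}$ holds at $x=0$, but $\model{L}[c]$ contains no Sacks-generic real over $\model{L}$: for every real $z \in \model{L}[c] \setminus \model{L}$ the intermediate model $\model{L}[z]$ is itself a Cohen extension of $\model{L}$, whereas a Sacks-generic real generates a minimal extension. The same model witnesses failure for Miller (Cohen reals are unbounded over $\model{L}$, yet no real in $\model{L}[c]$ is Miller-generic over $\model{L}$, e.g.\ because a Miller extension contains no Cohen real). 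No argument can repair this, because $\omega_1^{\model{L}[x]}$ may equal $\omega_1$, so $\model{L}[x]$ has $\omega_1$-many dense sets; a single transcendent real, and likewise any fusion (which can handle only countably many requirements), cannot meet them all. This simultaneously breaks your claim that $\forall x\,\Phi_{\poset{P}}(x)$ yields ``a whole perfect set of such generics below any prescribed condition,'' and with it the plan of forcing over $\model{L}[x]$ and transferring by Shoenfield absoluteness. The actual proofs avoid generics entirely: for $\forall x\,\Phi_{\poset{P}}(x) \Rightarrow \SSigma^1_2(\poset{P})$ they combine the dichotomy you mention (Mansfield--Solovay for Sacks, its superperfect and Laver analogues for Miller and Laver) with a \emph{coding} construction: from a single $z \notin \model{L}[a,T]$ (resp.\ unbounded, dominating over $\model{L}[a,T]$) one builds $T' \leq T$ every branch of which computes $z$ relative to $T$ (resp.\ is unbounded, dominating), so that $[T'] \cap \model{L}[a,T] = \emptyset$; coding disposes of all $\omega_1$-many requirements at once, which is precisely what fusion cannot do. Your contrapositive direction $\DDelta^1_2(\poset{P}) \Rightarrow \forall x\,\Phi_{\poset{P}}(x)$ (wellordering plus diagonal set) is, by contrast, the right idea.

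Second, in (iv) you derive $\SSigma^1_2(\silver) \Rightarrow \forall x(\baire \cap \model{L}[x]$ not dominating$)$ from the claim that a Silver-generic real is unbounded over the ground model. That claim is false: Silver forcing is $\baire$-bounding --- the paper itself relies on exactly this fact in the proof of Lemma \ref{lemma: YnotMiller} --- so it adds no unbounded reals. The implication is nonetheless true, but in \cite{BLH05} it is obtained by a direct combinatorial argument: assuming $\baire \cap \model{L}[x]$ is dominating, one defines a $\DDelta^1_2(x)$ set from the $\SSigma^1_2(x)$ wellordering (roughly, the parity of the rank of the least $\model{L}[x]$-real dominating the enumeration function associated to a branch) and shows that inside every Silver tree both this set and its complement are realized. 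Only the splitting half of your (iv) (the Silver generic is indeed splitting over the ground model) survives as stated.
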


In this paper we focus on Silver, Miller and Lebesgue measurability. In particular from (ii) and (iv) it follows $\SSigma^1_2(\textsc{Silver}) \Rightarrow \SSigma^1_2(\textsc{Miller})$. This implication has partially inspired the result of this paper, which we will present in section
\ref{silver-miller}, where we will construct a model
\[
\model{N}^* \models \vm \wedge \neg \mm \wedge \neg \lm \wedge \forall x \in \baire (\omega_1^{\model{L}[x]} < \omega_1),
\]
showing in particular that the above implication occurring for $\SSigma^1_2$ sets does not shift to the family of all sets of reals. The study of the behaviour of regularity properties on \emph{large} families of subsets of reals was initiated by Solovay (\cite{So70}), and then continued along the years by Shelah (\cite{Sh84} and \cite{Sh85}), Friedman and Schrittesser (\cite{S10}). Another reason which inspired the work was to find a way to drop Lebesgue measurability by iterating Shelah's amalgamation and obtaining $\omega_1$ inaccessible by reals.
Note that, in the Cohen model (i.e., the extension via adding $\omega_1$-many Cohen reals), Silver measurability holds for all projective sets, but it is unclear how the Miller measurability behaves in this model. Moreover, our purpose was also to have $\omega_1$ inaccessible by reals, so Cohen model was not of interest from this point of view
(in particular we wanted full-regularity on $\SSigma^1_2$).

We conclude this introductory section with a schema of the article: in section \ref{preliminaries} we review basic concepts and notation that we use throughout the paper; in 
section \ref{amalgamation} we introduce the tools we use later on: Shelah's amalgamation, unreachability and amoeba for Silver; then, section \ref{silver-miller} is devoted to prove the main result mentioned above. A last section is finally devoted to some concluding comments and possible further developments of the investigation.

\section{Preliminaries} \label{preliminaries}
Our notation is rather standard. A tree $T$ is a subset of $\cantorfin$ or $\bairefin$ closed under initial segments, i.e., for every $t \in T$, $t \restric k \in T$,
for every $k < |t|$, where $|t|$ represents the length of $t$.
We denote with $\stem(T)$ the longest element $t \in T$ compatible with every node of $T$, and we set $k \in \successor(t,T)$ iff $t^\conc k \in T$. We use the notation $t \unlhd t'$ meaning that $t$ is an initial segment of $t'$.
For every $t \in T$, we say that $t$ is a splitting node whenever $|\successor(t,T)| \geq 2$, and we denote with $\splitting(T)$ the set of all splitting nodes.
Moreover, for $n \geq 1$, we say $t \in T$ is an $n$th splitting node iff  $t \in \splitting(T)$ and $n$ is maximal such that there are $k_0 < \dots < k_{n-1}=|t|$ natural numbers such that
$t \restric k_j \in \splitting(T)$,
for every $j \leq n-1$, and we denote with $\splitting_n(T)$ the set consisting of the $n$th splitting nodes (note that, under this notation, $\stem(T)$ is the
$1$st splitting node, with $k_0=|\stem(T)|$). Furthermore, for every $t \in T$, the set $\{s \in T: s \text{ is compatible with }t \}$ is denoted with $T_t$. When $T$ is a
finite tree, $\term(T)$ denotes the set consisting of those $t$'s having no extension in $T$, and $\height(T):= \max\{n: \exists t \in T, |t|=n \}$ represents the height of $T$.
Finally, the \emph{body of $T$} is defined by $[T]:= \{ x: \forall n (x \restric n \in T)   \}$ and we say that $F \subseteq T$ is a \emph{front} iff $F$ is an antichain and for every $x \in [T]$ there exists $t \in F$ such that $t \unlhd x$. 

In this paper we deal with \enfa{Sacks} (or \enfa{perfect}) trees, i.e.,  trees such that each node can be extended to a splitting node. In particular, we focus
the attention on some particular types of perfect trees:
\begin{itemize}
 \item $T \subseteq \cantorfin$ is a \enfa{Silver tree} (or \enfa{uniform tree}) iff $T$ is perfect and for every $s,t \in T$, such that $|s|=|t|$, one has
$s^\conc 0 \in T \ifif t^\conc 0 \in T$ and $s^\conc 1 \in T \ifif t^\conc 1 \in T$.
\item $T \subseteq \bairefin$ is a \enfa{Miller tree} (or \enfa{superperfect tree}) iff $T$ is perfect and for every $t \in \splitting(T)$, one has
$|\successor(t,T)|=\omega$.
\end{itemize}
Clearly, Silver (Miller) forcing is the poset consisting of Silver (Miller) trees  ordered by inclusion. We denote these forcings by $\silver$ and $\miller$, respectively. Furthermore, if $G$ is the $\silver$-generic filter over the ground model $\model{N}$, we call the generic
branch $z_G= \bigcup\{ \stem(T): T \in G \}$ a Silver real (and analogously for Miller).
Other notions of forcing relevant to the topic of seperating regularities, which can similarly be presented in terms of their associated trees include: Sacks, Laver (mentioned in the introduction), Mathias, Heckler, eventually different and Matet, but we will not deal with them in this paper. 
Other posets which we will use throughout the paper will be the Cohen forcing $\cohen$, consisting of finite sequences of $0$s and $1$s, ordered by extension,
and the random forcing $\random$, consisting of perfect trees $T$ such that for every $t \in T$, $\mu([T_t])>0$, ordered by inclusion.

For the purpose of this paper we can use the following result as a definition of $\poset{P}$-measurability for a \emph{topologically reasonable pointclass} $\Theta$, i.e., a family of sets closed under continuous preimages and intersections with closed sets. 
\begin{lemma}[Brendle-L\"{o}we, \cite{BL99}, lemma 2.1] \label{lemma:theta-measurability}
Let $\poset{P} \in \{\silver, \miller \}$ and let $\Theta$ be a topologically reasonable family of sets of reals. Then
$\Theta(\poset{P}) \equiv$ ``every set in $\Theta$ is $\poset{P}$-measurable'' iff
\[
 \forall X \in \Theta, \quad \exists T \in \poset{P} ([T] \subseteq X \vee [T] \cap X= \emptyset).
\]
\end{lemma}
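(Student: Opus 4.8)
The plan is to prove the two implications of the equivalence separately; the forward direction is immediate, and essentially all of the work sits in the converse. Suppose first that every $X \in \Theta$ is $\poset{P}$-measurable. Applying Definition~\ref{def:P-measurable} to the full tree $T_0 \in \poset{P}$ (that is, $\cantorfin$ when $\poset{P}=\silver$ and $\bairefin$ when $\poset{P}=\miller$, both of which lie in the respective poset) yields at once a condition $T' \leq T_0$ with $[T'] \subseteq X$ or $[T'] \cap X = \emptyset$, which is exactly the displayed right-hand side.

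For the converse the key tool is a canonical homeomorphism. For each $T \in \poset{P}$ I would build a homeomorphism $h_T$ from the ambient space ($\cantor$ if $\poset{P}=\silver$, $\baire$ if $\poset{P}=\miller$) onto $[T]$, defined by recursion: as a branch is constructed, the coordinates of the input are read off to decide which element of $\successor(t,T)$ to follow at each splitting node $t \in \splitting(T)$ that is met, while the forced values along the non-splitting stretches in between are simply copied (on the Silver side the choice is the single bit, on the Miller side the choice selects one of the $\omega$ successors). The essential feature of $h_T$ is that it is $\poset{P}$-preserving: for every $S \in \poset{P}$ living in the ambient space, the image $h_T[[S]]$ is the body $[T']$ of a condition $T' \leq T$ with $T' \in \poset{P}$, and $S \mapsto T'$ is an order-preserving bijection between $\poset{P}$ and $\{T' \in \poset{P} : T' \leq T\}$. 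This is the familiar fact that $\poset{P}$ restricted below any condition is isomorphic to $\poset{P}$ itself.

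Granting this, fix $X \in \Theta$ and an arbitrary $T \in \poset{P}$. Since $h_T$ is continuous and $\Theta$ is closed under continuous preimages, the set $Y := h_T^{-1}(X)$ belongs to $\Theta$. By the hypothesised right-hand side there is $S \in \poset{P}$ with $[S] \subseteq Y$ or $[S] \cap Y = \emptyset$. Transporting $S$ back through $h_T$ produces $T' \leq T$, $T' \in \poset{P}$, with $[T'] = h_T[[S]]$: in the first case $[T'] \subseteq X$ and in the second $[T'] \cap X = \emptyset$, since $h_T$ sends $[S]$ into $X$, respectively into its complement. As $T$ was arbitrary this shows $X$ is $\poset{P}$-measurable, and since $X \in \Theta$ was arbitrary we obtain $\Theta(\poset{P})$.

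I expect the main obstacle to be the verification that $h_T$ is genuinely $\poset{P}$-preserving. For $\silver$ this is clean: the uniformity of Silver trees makes the splitting pattern depend only on the level, so copying one input bit at each splitting level visibly sends uniform trees to uniform subtrees of $T$. For $\miller$ more care is needed, because a superperfect tree also records which coordinate is followed along the finite stems joining consecutive $\omega$-splitting nodes; the bookkeeping that matches the splitting nodes of $S$ with those of $T$ and reproduces the infinite-branching requirement $|\successor(t,T')| = \omega$ at each splitting node is where the argument demands attention. The closure of $\Theta$ under intersections with closed sets guarantees in addition that confining attention to the closed set $[T]$ never leaves the pointclass, so that every step of the construction stays inside $\Theta$.
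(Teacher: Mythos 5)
Your proposal is correct, but note that the paper itself contains no proof of this lemma: it is quoted from Brendle--L\"owe \cite{BL99} (Lemma 2.1) and then used essentially as the working definition of $\Theta(\poset{P})$, so there is no in-paper argument to compare against. Your argument --- the trivial forward direction via the maximal tree, and for the converse the transfer of $X$ through the canonical homeomorphism $h_T$ between the ambient space and $[T]$, using closure of $\Theta$ under continuous preimages --- is exactly the standard argument behind the cited result. The point you flag as delicate, namely that $h_T$ really maps onto $[T]$ and carries Miller trees to Miller subtrees of $T$ in the superperfect case, does go through under the paper's definitions: since every splitting node of a Miller tree is required to have $\omega$-many successors and every node extends to a splitting node, every branch of $T$ must meet infinitely many splitting nodes (otherwise $T$ would reduce to a single path above some node, contradicting perfectness), so reading off the successor choices made at the splitting nodes is a bijection between $\baire$ and $[T]$, and it identifies Miller trees with the Miller subtrees of $T$ exactly as you describe.
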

Note that the family of projective sets, the family of $\SSigma^1_n$-, $\DDelta^1_n$-, $\mathbf{\Pi}^1_n$-sets are topologically reasonable. Finally, also the family of all sets of reals, which we denote by $\textbf{all}$, is trivially topologically reasonable.

We now prove a simple result, which gives us a direct implication between Baire property and Silver measurability.
\begin{fact} \label{fact:comeager-silver}
Any comeager set contains the body of a Silver tree.
\end{fact}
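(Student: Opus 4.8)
The plan is to reduce to a dense $G_\delta$ and then build the Silver tree by a fusion-style construction, maintaining a finite uniform approximation whose terminal nodes form a front. Since any comeager set $C \subseteq \cantor$ contains a dense $G_\delta$, I may fix open dense sets $(D_n)_{n \in \omega}$, which I take to be decreasing, with $\bigcap_n D_n \subseteq C$; it then suffices to produce a Silver tree $T$ with $[T] \subseteq \bigcap_n D_n$. The construction proceeds in $\omega$ stages, producing an increasing sequence of finite trees $T_0 \subseteq T_1 \subseteq \cdots$ (starting from $\{\emptyseq\}$), each a finite uniform approximation all of whose terminal nodes have a common length $\ell_n$ and satisfy $[t] \subseteq D_n$ for every $t \in \term(T_n)$. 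Passing from $T_n$ to $T_{n+1}$ consists of two moves, both performed uniformly across a whole level so as to preserve the defining condition of a Silver tree: a \emph{securing move}, which extends every terminal node by a common block of bits until each one's basic clopen neighborhood lies inside $D_{n+1}$; and a single \emph{splitting move}, which appends both $0$ and $1$ to every terminal node, guaranteeing that infinitely many splitting levels occur and hence that the final tree $T = \bigcup_n T_n$ is perfect.

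The heart of the matter, and the step I expect to be the main obstacle, is the securing move, because the uniformity of Silver trees forbids steering the individual terminal nodes into $D_{n+1}$ independently: at a determined level every node must receive the same bit. I resolve this by securing the terminal nodes one at a time. Enumerating them as $t_1, \dots, t_m$ and using density of $D_{n+1}$, I first choose a block $b^{(1)}$ with $[t_1 \conc b^{(1)}] \subseteq D_{n+1}$ and append $b^{(1)}$ uniformly to all nodes; then I choose $b^{(2)}$ with $[t_2 \conc b^{(1)} \conc b^{(2)}] \subseteq D_{n+1}$ and append it uniformly; and so on. The key point making this compatible with uniformity is monotonicity: once a node's neighborhood lies in the \emph{open} set $D_{n+1}$, every further extension stays inside it, so the nodes secured at earlier sub-steps are not disturbed by the common blocks appended later. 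After $m$ sub-steps every terminal node has its neighborhood inside $D_{n+1}$, and since the $D_n$ are decreasing, inside all $D_k$ with $k \leq n+1$; the common length is preserved throughout, so the invariant on $T_{n+1}$ holds after the splitting move (each new node still lies in $D_{n+1}$, as it extends a secured node).

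Finally I verify that $T = \bigcup_n T_n$ works. Each move preserves the uniformity condition level by level, so $T$ is uniform; the splitting moves ensure every node extends to a splitting node, so $T$ is a genuine Silver tree. For the inclusion, note that $\term(T_n)$ is a front in $T$: fixing $x \in [T]$ and $n \in \omega$, the branch $x$ passes through a unique $t \in \term(T_n)$, namely $t = x \restric \ell_n$, and by the maintained invariant $x \in [t] \subseteq D_n$. Hence $x \in \bigcap_n D_n \subseteq C$, giving $[T] \subseteq C$ as required.
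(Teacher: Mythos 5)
Your proof is correct and takes essentially the same route as the paper's: the paper's sequential choice of extensions $s^0, s^1, \dots$ combined with its $\oplus$ operation is exactly your ``secure the terminal nodes one at a time with common blocks, relying on openness of $D_{n+1}$'' move, and its enumeration of the children ${t_r}^\conc i$ is your splitting move. The only cosmetic slip is that $T_0 = \{\emptyseq\}$ does not satisfy your stated invariant $[t] \subseteq D_0$ unless one harmlessly takes $D_0 = \cantor$ (or performs a securing move before declaring $T_0$).
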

\begin{proof}
Let $Y \supseteq \bigcap_{n \in \omega} D_n$, where all $D_n$'s are open dense. We use the following notation: for every $s,t \in \cantorfin$, put
\[
t \oplus s := \{t' \in \cantorfin: \forall n < |t|(t'(n)=t(n)) \wedge \forall n \geq |t|( t'(n)= s(n))\}.
\]
Consider the following recursive construction:
\begin{itemize}
 \item let $t_\emptyset \in \cantorfin$ such that $[t_\emptyset] \subseteq D_0$;
 \item Assume for every $r \in 2^{n}$ we have already defined $t_r$ such that $[t_r] \subseteq D_n$.
 Let $\{ t_j: j < 2^{n+1} \}$ be an enumaration of $\{ {t_r}^{\conc}i: r \in 2^n, i=0,1  \}$. Then
consider the following construction along $j < 2^{n+1}$:
\begin{itemize}
\item[]for $j=0$, pick $s^0 \trianglerighteq t_0$ such that $[s^0] \subseteq D_{n+1}$;
\item[]for $j+1$, pick $s^{j+1} \trianglerighteq  t_{j+1} \oplus s^j$ such that $[s^{j+1}] \subseteq D_{n+1}$.
\item[] Then, for every $r \in 2^n$ and $i=0,1$, put $t_{r^{\conc} i} = t_j \oplus s^{2^{n+1}-1}$, where $t_j= {t_{r}}^{\conc}i$.
\end{itemize}
\end{itemize}
Finally, put $R:= \{ t_{r}: r \in \cantorfin, t_{r} \text{ as defined in the construction} \}$ and $T:= \{ t \in \cantorfin: \exists t' \in R \exists k \leq |t'| (t' \restric k = t)  \}$ (i.e., $T$ is the downward closure of $R$). It is clear that $T$ is a Silver tree such that
for every $z \in [T]$, $z \in \bigcap_{n \in \omega} D_n$.
\end{proof}

\begin{corollary} \label{corollary:cohen-silver}
If $\Theta$ is a topologically reasonable family, then $\Theta(\textsc{Baire}) \Rightarrow \Theta(\textsc{Silver})$.
\end{corollary}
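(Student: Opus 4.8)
The plan is to reduce the statement, via Lemma~\ref{lemma:theta-measurability}, to producing for each $X \in \Theta$ a single Silver tree $T$ with $[T] \subseteq X$ or $[T] \cap X = \emptyset$, and then to extract such a tree from Fact~\ref{fact:comeager-silver} using the Baire property of $X$. Since $\Theta$ is topologically reasonable and $\poset{P}=\silver$, Lemma~\ref{lemma:theta-measurability} tells us that $\Theta(\textsc{Silver})$ is equivalent to the assertion that every $X \in \Theta$ contains or is disjoint from the body of some Silver tree; so it suffices to verify this dichotomy under the hypothesis $\Theta(\textsc{Baire})$.

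First I would fix $X \in \Theta$. By $\Theta(\textsc{Baire})$, $X$ has the Baire property, so there is an open set $U$ with $X \triangle U$ meager, and I would split into two cases according to whether $U$ is empty. If $U = \emptyset$, then $X$ itself is meager, hence $\cantor \setminus X$ is comeager; Fact~\ref{fact:comeager-silver} then yields a Silver tree $T$ with $[T] \subseteq \cantor \setminus X$, i.e.\ $[T] \cap X = \emptyset$, as required.

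If instead $U \neq \emptyset$, I would choose a basic clopen set $[s] \subseteq U$ with $s \in \cantorfin$. A direct computation shows that $[s] \setminus X = (X \triangle U) \cap [s]$ is meager, so $X$ is comeager relative to the subspace $[s]$. Transporting along the homeomorphism $h \colon \cantor \to [s]$, $h(x) = s^\conc x$, the set $h^{-1}(X \cap [s])$ is comeager in $\cantor$, so Fact~\ref{fact:comeager-silver} supplies a Silver tree $T'$ with $[T'] \subseteq h^{-1}(X \cap [s])$. Pulling back, $T := \{ s \restric k : k \le |s| \} \cup \{ s^\conc t : t \in T' \}$ is again a Silver tree and $[T] = h([T']) \subseteq X$. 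In either case the dichotomy of Lemma~\ref{lemma:theta-measurability} holds, establishing $\Theta(\textsc{Silver})$.

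The one place where some care is needed is the relativization of Fact~\ref{fact:comeager-silver} to the clopen piece $[s]$: I must check that prepending $s$ sends Silver trees of $\cantor$ to Silver trees of the whole space, which amounts to observing that the Silver uniformity constraint imposes nothing below the common stem $s$ (there being a unique node at each level $< |s|$) and is preserved by $h$ above it. This is routine, so the real content of the corollary lies entirely in Fact~\ref{fact:comeager-silver} together with the reformulation of measurability provided by Lemma~\ref{lemma:theta-measurability}.
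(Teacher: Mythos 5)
Your proposal is correct and follows essentially the same route as the paper: use the Baire property to locate a basic clopen set $[s]$ on which $X$ is comeager (or, in the meager case, take the complement), apply Fact~\ref{fact:comeager-silver} relativized to $[s]$, and conclude via Lemma~\ref{lemma:theta-measurability}. The only difference is that you make explicit the transport along the homeomorphism $x \mapsto s^\conc x$ and the check that this preserves Silver trees, which the paper leaves implicit in the phrase ``one can find $T \in \silver$ with $\stem(T)=s$.''
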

\begin{proof}
Pick a set $X \in \Theta$ having the Baire property. Then, if $X$ is not meager, there exists $s \in \cantor$ such that $X$ is comeager in $[s]$. Hence, by fact \ref{fact:comeager-silver},
one can find $T \in \silver$, with $\stem(T)=s$, such that $[T] \subseteq X$. In case $X$ is meager, one can analogously find a Silver tree contained in the complement of $X$.
By lemma \ref{lemma:theta-measurability}, that is sufficient to obtain $\Theta(\textsc{Silver})$.
\end{proof}

\section{Shelah's amalgamation and unreachability} \label{amalgamation}
In the construction of the model about Lebesgue measurability and Baire pro\-perty (see \cite{So70}), Solovay used a key property of the L\'{e}vy-collapsing algebra,
which we recall in the following definition. 

\begin{definition} \index{reflection property}
A complete Boolean algebra $\algebraQ$ has the \enfa{Solovay property} if and only if for any formula $\Phi$ with parameters in the ground model $\model{N}$
and for any $\algebraQ$-name for a real $\dot{x}$, one has $\Vert \Phi(\dot{x}) \Vert_\algebraQ \in \algebraQ_{\dot{x}}$, where $\algebraQ_{\dot{x}}$ is the complete
Boolean algebra generated by $\dot{x}$, i.e., $\algebraQ_{\dot{x}}$ is generated by $\{ \Vert s \lhd \dot{x} \Vert_\algebraQ:  s \in \cantorfin \}$.
\end{definition}
The meaning of the definition is that, to evaluate $\Phi(\dot{x})$ in $\model{N}^\algebraQ$, it suffices to know its value in a certain partial extension
obtained from a subalgebra of $\algebraQ$, namely $\algebraQ_{\dot{x}}$.
It is not hard to show that a particular family of complete Boolean algebras, satisfying the Solovay property, is the class of strongly homogeneous algebras,
which we now define.
\begin{definition}
A complete Boolean algebra $\algebraQ$ is \enfa{strongly homogeneous} \index{strongly homogenous algebra} if and only if for every pair of $\sigma$-generated complete subalgebras
$\algebraQ_1,\algebraQ_2 \lessdot \algebraQ$, every isomorphism $\phi^*: \algebraQ_1 \rightarrow \algebraQ_2$ can be extended 
to an automorphism $\phi: \algebraQ \rightarrow \algebraQ$.
\end{definition}
\noindent Lemma 9.8.3 in \cite{BJ95} shows that
\begin{equation}
\text{if $\algebraQ$ is strongly homogeneous, then $\algebraQ$ satisfies the Solovay property}.
\end{equation}
Note that the L\'{e}vy-collapsing algebra is strongly homogeneous.
In \cite{Sh84}, Shelah introduced \enfa{amalgamation}, a general method to build Boolean algebras satisfying a property related to strong homogeneity, and in effect providing us with a tool to prove variations of Solovay's result.

\paragraph{Shelah's construction.} We review Shelah's amalgamation in as much detail as we need for the present purpose and refer the reader to the splendid exposition in \cite{JR93} for details.

\begin{definition}
Let $\algebra{B}$ be a complete Boolean algebra and $\algebra{B}_0 \lessdot \algebra{B}$. The \emph{projection} map $\pi: \algebra{B} \rightarrow \algebra{B}_0$ is defined by $\pi(b)=  \prod \{b \leq b_0: b_0 \in \algebra{B}_0 \}$.
\end{definition}
\begin{definition}
Let $\algebra{B}$ be a complete Boolean algebra and $\algebraQ_1, \algebraQ_2$ two isomorphic complete subalgebras of $\algebra{B}$ and $\phi_0$ the isomorphism between them.
One defines the \enfa{amalgamation of $\algebra{B}$ over $\phi_0$}, say $\amal(\algebra{B}, \phi_0)$, as follows: first, let
\[
\algebra{B} \times_{\phi_0} \algebra{B} := \{ (b',b'') \in \algebra{B} \times \algebra{B}: \phi_0(\pi_1(b')) \cdot \pi_2(b'')\neq \mathbf{0} \},
\]
where $\pi_j: \algebra{B} \rightarrow \algebra{B}_j$ is the projection, for $j=1,2$, and consider on such $\algebra{B} \times_{\phi_0} \algebra{B}$ simply
the product order. Then set \index{amalgamation $\amal$} $\amal(\algebra{B}, \phi_0):= B(\algebra{B} \times_{\phi_0} \algebra{B})$, i.e., the complete Boolean algebra generated by
$\algebra{B} \times_{\phi_0} \algebra{B}$.
\end{definition}
One can easily see that $e_j: \algebra{B} \rightarrow \amal(\algebra{B}, \phi_0)$ such that
\[
 e_1(b)= (b,\mathbf{1}) \text{ and } e_2(b)=(\mathbf{1},b)
\]
are both complete embeddings (\cite{JR93}, lemma 3.1). Further, for any $b_1 \in \algebra{B}_1$, one can show that
\begin{equation} \label{equa:amalgamation}
 (b_1, \mathbf{1}) \text{ is equivalent to } (\mathbf{1}, \phi_0(b_1)).
\end{equation}
In fact, assume $(a',a'') \leq (b_1, \mathbf{1})$ and $(a',a'')$ incompatible with $(\mathbf{1}, \phi_0(b_1))$ (in $\amal(\algebra{B}, \phi_0)$). The former implies $\pi_1(a') \leq b_1$, while the latter implies $\pi_2(a'') \cdot \phi_0(b_1)=\mathbf{0}$, and hence one obtains $\phi_0(\pi_1(a'))\cdot \pi_2(a'')=\mathbf{0}$, which means that the pair $(a',a'')$ does not belong to the amalgamation.

Moreover, if one considers $f_1: e_2[\algebra{B}] \rightarrow e_1[\algebra{B}]$ such that, for every
$b \in \algebra{B}$, $f_1(\mathbf{1},b)=(b,\mathbf{1})$, one obtains an isomorphism between two copies of
$\algebra{B}$ into $\amal(\algebra{B}, \phi_0)$, such that $f_1$ is an extension of $\phi_0$ (since for every
$b_1 \in \algebra{B}_1$, by (\ref{equa:amalgamation}) above, $e_1(b_1)= (b_1, \mathbf{1})=(\mathbf{1}, \phi_0(b_1))=e_2(\phi_0(b_1))$, which means $e_1{\upharpoonright} B_1 = e_2\circ \phi_0$).

Hence, if one considers $e_1[\algebra{B}], e_2[\algebra{B}]$ as two isomorphic complete subalgebras of $\amal(\algebra{B}, \phi_0)$, one can repeat the same
procedure to construct
\[
2\text{-}\amal(\algebra{B}, \phi_0) := \amal(\amal(\algebra{B}, \phi_0), f_1)
\]
and $f_2$ the isomorphism between two copies of $\amal(\algebra{B}, \phi_0)$ extending
$f_1$. 
It is clear that one can continue such a construction, in order to define, for every $n \in \omega$,
\[
n+1\text{-}\amal(\algebra{B}, \phi_0) :=\amal(n\text{-}\amal(\algebra{B}, \phi_0), f_n)
\]
and $f_{n+1}$ the isomorphism between two copies of $n\text{-}\amal(\algebra{B}, \phi_0)$ extending $f_n$.

Finally, putting
\begin{itemize}
\item[(i)] $\omega\text{-}\amal(\algebra{B}, \phi_0)=$ Boolean completion of direct limit of $n\text{-}\amal(\algebra{B}, \phi_0)$'s, and
\item[(ii)] $\phi = \lim_{n \in \omega} f_n$ (in the obvious sense),
\end{itemize}
one obtains $\algebra{B}_1, \algebra{B}_2 \lessdot \omega\text{-}\amal(\algebra{B}, \phi)$ and $\phi$ automorphism of
$\omega\text{-}\amal(\algebra{B}, \phi_0)$ extending $\phi_0$.

We shall abuse terminology by referring to the \emph{Boolean completion} of the direct limit of a sequence of Boolean algebras simply as their \emph{direct limit} (since only complete Boolean algebras are of interest to us). We write $\lim_{\alpha < \lambda} \algebra{B}_\alpha$ for the direct limit understood in this way.

We will iterate this construction (each time with a new pair of isomorphic sub-algebras) as a method to obtain a Boolean algebra which satisfies a particular variant of strong homogeneity.

\paragraph{Unreachability.} A crucial ingredient to the present result is \emph{unreachability}, a property of reals which in a sense is preserved both by Silver forcing and by amalgamation. A real is unreachable if it avoids every \emph{slalom} of the ground model. 

\begin{itemize}
\item $\Gamma_k = \{ \sigma \in \mathbf{HF}^\omega: \forall n \in (|\sigma(n)| \leq 2^{kn})   \}  \}$ and $\Gamma= \bigcup_{k \in \omega} \Gamma_k$, where $\mathbf{HF}$ denotes the hereditary finite sets;
\item let $g(n)=2^n$ and $\{ I_n: n \in \omega \}$ be the partition  of $\omega$ such that $I_0= \{ 0 \}$ and $I_{n+1}= \Big [\sum_{j \leq n} g(j), \sum_{j \leq n+1} g(j) \Big )$, for every $n \in \omega$;
\item given $x \in \cantor$, define $h_x(n)= x \restric I_n$.
\end{itemize}
\begin{definition}
One says that $z \in \cantor$ is \enfa{unreachable over $\model{N}$} iff
\[
\forall \sigma \in \Gamma \cap \model{N} \exists n \in \omega (h_z(n) \notin \sigma(n)).
\]
\end{definition}
\begin{remark} \label{remark:random-unreachable}
If $z$ is random over $\model{N}$, then $z$ is unreachable over $\model{N}$.
To prove that, assume towards a contradiction that there is $\sigma \in \Gamma_k \cap \model{N}$ such that for every $n \in \omega$, $h_z(n) \in \sigma(n)$. Consider the set $B:= \{ x \in \cantor: \forall n \in \omega (h_x(n) \in \sigma(n)) \}$, which is in $\model{N}$ by construction. Since
$\lim_{n \in \omega} \frac{2^{kn}}{2^{g(n)}}=0$, we get that $B$ has measure zero. Hence $z \notin B$, which is a contradiction.
\end{remark}
\begin{remark} \label{remark:cohen-unreachable}
If $x$ is Cohen over $\model{N}$ then $x$ is unreachable over $\model{N}$. The proof is similar to the above one, by noting that the set $B$ is closed nowhere dense too, simply bacause for every $n \in \omega$, $|\sigma(n)| < 2^{g(n)}$ and so, given a sequence $s \in \cantorfin$, one can find an extension $s'$ such that $[s'] \cap B = \emptyset$.
\end{remark}
\begin{lemma} \label{lemma:preserving-unreachability}
Let $\algebra{B}, \algebra{B}_1, \algebra{B}_2, \phi_0, e_1, e_2$ as above and $\dot{x}$ a $\algebra{B}$-name for an element of $\cantor$. If
$\force_{\algebra{B}} \text{`` $\dot{x}$ is unreachable over $\model{N}^{\algebra{B}_1}$ and  $\model{N}^{\algebra{B}_2}$ ''},$
then
\[
\force_{\amal(\algebra{B},\phi_0}) \text{`` $e_1(\dot{x})$ is unreachable over $\model{N}^{e_2[\algebra{B}]}$ ''},
\]
and analogously $\force_{\amal(\algebra{B},\phi_0}) \text{`` $e_2(\dot{x})$ is unreachable over $\model{N}^{e_1[\algebra{B}]}$ ''}$.
\end{lemma}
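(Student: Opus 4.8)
The plan is to exhibit $\amal(\algebra{B},\phi_0)$ as a two-step forcing: first the shared subalgebra $\mathbf{D}:=e_1[\algebra{B}_1]=e_2[\algebra{B}_2]$ (these coincide by (\ref{equa:amalgamation})), producing an intermediate model $\model{N}'$, and then, over $\model{N}'$, the \emph{product} of the two quotients $\algebra{B}/\algebra{B}_1$ and $\algebra{B}/\algebra{B}_2$. Indeed, the side condition $\phi_0(\pi_1(b'))\cdot\pi_2(b'')\neq\mathbf{0}$ defining $\algebra{B}\times_{\phi_0}\algebra{B}$ is precisely the requirement that the two copies agree on $\mathbf{D}$, so the generic filter $\tilde{G}$ yields two $\algebra{B}$-generics $G_1=e_1^{-1}[\tilde{G}]$ and $G_2=e_2^{-1}[\tilde{G}]$ with $\phi_0(G_1\cap\algebra{B}_1)=G_2\cap\algebra{B}_2$ and, over the common model $\model{N}'=\model{N}[G_1\cap\algebra{B}_1]=\model{N}[G_2\cap\algebra{B}_2]$, mutually generic. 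First I would verify this decomposition carefully, since it is the backbone of everything that follows; writing $x:=e_1(\dot{x})[\tilde{G}]=\dot{x}[G_1]$, the hypothesis applied to the $\algebra{B}$-generic $G_1$ then gives that $x$ is unreachable over $\model{N}'$.

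Next I would argue by contradiction, assuming some condition forces $e_1(\dot{x})$ to be captured by an $e_2$-name $e_2(\dot{\tau})$ for a slalom in $\Gamma_k$. Transporting this through the decomposition, over $\model{N}'$ there is a product condition $(q_1,q_2)$ in $(\algebra{B}/\algebra{B}_1)\times(\algebra{B}/\algebra{B}_2)$ forcing $\forall n\,(h_{\dot{x}}(n)\in\dot{\tau}(n))$, where $\dot{x}$ is read as a $\algebra{B}/\algebra{B}_1$-name (its $\algebra{B}_1$-part being fixed in $\model{N}'$). Fixing a $(\algebra{B}/\algebra{B}_2)$-generic $K\ni q_2$ and passing to $\model{N}'[K]=\model{N}[G_2]$, I obtain a genuine slalom $\sigma=\dot{\tau}[K]\in\Gamma_k\cap\model{N}[G_2]$ together with $q_1\Vdash_{\algebra{B}/\algebra{B}_1}\forall n\,(h_{\dot{x}}(n)\in\sigma(n))$.

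The heart of the argument is a reflection of this narrow slalom back into $\model{N}'$. I would define, entirely inside $\model{N}'$,
\[
\rho(n):=\{\,v\in 2^{I_n}:\exists r\leq q_1\ (r\Vdash_{\algebra{B}/\algebra{B}_1}h_{\dot{x}}(n)=v)\,\},
\]
the set of possible $n$th blocks of $\dot{x}$ below $q_1$. Using that the forcing relation for the fixed statement $h_{\dot{x}}(n)=v$ is absolute between $\model{N}'$ and $\model{N}'[K]$ (the poset, the name and the parameters all lie in $\model{N}'$), every such $v$ is realised by some generic through $q_1$, and hence satisfies $v\in\sigma(n)$; therefore $\rho(n)\subseteq\sigma(n)$ and so $|\rho(n)|\leq 2^{kn}$, i.e. $\rho\in\Gamma_k\cap\model{N}'$. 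On the other hand $q_1$ forces each block $h_{\dot{x}}(n)$ to take one of its possible values, so $q_1\Vdash_{\algebra{B}/\algebra{B}_1}\forall n\,(h_{\dot{x}}(n)\in\rho(n))$. This directly contradicts the unreachability of $x$ over $\model{N}'$ established above. The statement for $e_2(\dot{x})$ is then obtained symmetrically, swapping the two copies and invoking the hypothesis over $\model{N}^{\algebra{B}_2}$ in place of $\model{N}^{\algebra{B}_1}$.

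The step I expect to be the main obstacle is the reflection inclusion $\rho(n)\subseteq\sigma(n)$: it hinges on the absoluteness of the $(\algebra{B}/\algebra{B}_1)$-forcing relation between $\model{N}'$ and its extension $\model{N}'[K]$, which in turn relies on the clean product (mutual genericity) decomposition of the amalgamation. Getting that decomposition precisely right — so that $\algebra{B}/\algebra{B}_1$ really is a forcing over $\model{N}'[K]$ with the same forcing relation on ground-model statements — is where the care is needed; once it is in place, the crucial width bound $|\rho(n)|\leq 2^{kn}$ falls out for free from $\rho(n)\subseteq\sigma(n)$, which is exactly what makes $\rho$ a legitimate element of $\Gamma_k$ in the model $\model{N}'$ and closes the contradiction.
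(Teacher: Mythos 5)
Your proposal is correct, and its backbone is the same as the paper's: you use the identification $e_1[\algebra{B}_1]=e_2[\algebra{B}_2]$ (equation (\ref{equa:amalgamation})) and the fact, which the paper cites from Judah--Roslanowski, that over the intermediate model $\model{N}'=\model{N}^{\algebra{B}_1}$ the amalgamation becomes (densely) the product of the two quotient copies of $\algebra{B}$; this reduces the lemma to a preservation statement for products over $\model{N}'$. Where you genuinely differ is in how that product step is proved. The paper never leaves the forcing relation: writing the factors as $\algebra{A}_0$ and $\algebra{A}_1$, for each $n$ it picks $b_n\leq a_1$ in the second factor deciding $\sigma(n)=W_n$, observes that $\langle W_n:n\in\omega\rangle$ is a slalom in $\Gamma_k$ lying in the ground model, and then applies the forced unreachability of $\dot x$ to find $a\leq a_0$ and $j$ with $a\force h_{\dot x}(j)\notin W_j$, so that the single condition $(a,b_j)$ yields the contradiction. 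You instead fix an actual generic $K$ for the second factor and reflect on the first factor: you collect the possible values of $h_{\dot x}(n)$ below $q_1$ into $\rho(n)$ and get the crucial width bound from the inclusion $\rho(n)\subseteq\sigma(n)$, which requires upward absoluteness of the forcing relation for the arithmetic statements $h_{\dot x}(n)=v$. The two cores are dual: the paper builds the ground-model slalom from decided values of the slalom name on the $\algebra{A}_1$ side, while you build it from possible values of the real on the $\algebra{A}_0$ side. Both are sound. The paper's version is slightly more economical, being a pure density argument that needs neither a generic filter for the second factor nor any absoluteness of $\force$; yours costs exactly the absoluteness fact you flag as the main obstacle, and that fact does hold in your setting in the direction you need it: any $H$ generic over $\model{N}'[K]$ is generic over $\model{N}'$, and arithmetic facts about $\dot x[H]$ transfer between the two extensions, so $r\force^{\model{N}'}h_{\dot x}(n)=v$ implies $r\force^{\model{N}'[K]}h_{\dot x}(n)=v$, which gives $\rho(n)\subseteq\sigma(n)$ and closes your argument.
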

\begin{proof}
It is proven in \cite{JR93} that
\[
\amal(\algebra{B}, \phi_0)/\algebra{B}_1 \text{ densely embeds into }  e_1[\algebra{B}/\algebra{B}_1] \times e_2[\algebra{B}/\algebra{B}_2].
\]
That roughly means that, in $\model{N}^{\algebra{B}_1}$, the amalgamation (quotiented by $\algebra{B}_1$) can be seen as a product of the two copies of $\algebra{B}$. Hence, for our proof it suffices to show that, if $\algebra{A}_0$ and $\algebra{A}_1$ are two complete Boolean algebras and $\dot{x}$ is an $\algebra{A}_0$-name for an element in $\cantor$ such that $\force_{\algebra{A}_0}$ `` $\dot{x}$ is unreachable over $\model{N}$'', then
\[
\force_{\algebra{A}_0 \times \algebra{A}_1} \text{`` $\dot{x}$ is unreachable over $\model{N}[G]$ ''},
\]
where $G$ is $\algebra{A}_1$-generic over $\model{N}$. In fact by considering $\algebra{A}_0$ and $\algebra{A}_1$ to be $e_1[\algebra{B}/\algebra{B}_1]$ and $e_2[\algebra{B}/\algebra{B}_2]$, respectively (and $\model{N}^{\algebra{B}_1}$ as ground model), we obtain exactly the conclusion of the lemma.

To reach a contradiction, assume there is $\sigma \in \Gamma_k \cap \model{N}[G]$ and $(a_0,a_1) \in \algebra{A}_0 \times \algebra{A}_1$ such that $(a_0,a_1) \force \forall n \in \omega (h_x(n) \in \sigma(n))$. For each $n \in \omega$ one can pick $b_n \in \algebra{A}_1$, $b_n \leq a_1$ and $ W_n \subset \omega$, with $|W_n| \leq 2^{kn}$, such that $b_n \force \sigma(n)=W_n$. Furthermore, since the sequence $\langle W_n: n \in \omega  \rangle $ is in $\model{N}$, one can find $a \in \algebra{A}_0$, $a \leq a_0$ and $j \in \omega$ such that $a \force h_x(j) \notin W_j$. Hence, we would get, on the one hand $(a,b_j) \leq (a_0, a_1)$ and so $(a, b_j) \force \forall n \in \omega (h_x(n) \in \sigma(n))$, but on the other hand $(a,b_j) \force h_x(j) \notin W_j=\sigma(j)$.
\end{proof}
\begin{lemma} \label{lemma:silver-preserves-unreachable}
Assume $x \in \cantor$ be unreachable over $\model{N}$. Then $x$ remains unreachable over $\model{N}[z]$, where $z$ is a Silver real. In other words, the property of being unreachable is preserved by Silver extensions.
\end{lemma}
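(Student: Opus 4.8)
The plan is to argue by contradiction through a fusion argument that realizes the \emph{Sacks property} of Silver forcing, crucially exploiting that $x$ itself lies in $\model{N}$, so that $h_x$ is a ground-model object and only the covering slalom is new. Suppose $x$ were not unreachable over $\model{N}[z]$. Then there are a Silver condition $T_0$, a $k \in \omega$ and a $\silver$-name $\dot{\sigma}$ such that $T_0 \force$ `` $\dot{\sigma} \in \Gamma_k$ and $\forall n\,(h_x(n) \in \dot{\sigma}(n))$ '', where each $h_x(n)$ is a fixed binary string of length $g(n)=2^n$ belonging to $\model{N}$. The whole construction below is carried out in $\model{N}$, using only the name $\dot{\sigma}$ and the forcing relation.

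First I would pass to the partial-function presentation of Silver conditions: a condition is coded by its increasing sequence of splitting (free) levels $a_0 < a_1 < \dots$ together with the fixed bits at the remaining levels, and for a node $t$ fixing the first $n$ free coordinates I write $(T)_t$ for the corresponding subtree. Along this presentation I would build a fusion sequence $T_0 \geq T_1 \geq \dots$ with $T_{n+1} \leq_n T_n$, meaning that $T_{n+1}$ retains the first $n$ free coordinates of $T_n$. At stage $n$ there are at most $2^n$ nodes $t$ fixing those first $n$ free coordinates; processing them one after another, each time shrinking below the current tree (possibly raising the later free levels, but never the committed ones), I would find for each such $t$ an extension forcing $\dot{\sigma}(n)=W_t$ with $|W_t| \leq 2^{kn}$, and then amalgamate the finitely many resulting subtrees into a single Silver condition $T_{n+1} \leq_n T_n$.

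Let $T_\omega = \bigcap_n T_n$, which is again a Silver tree, and set $\tau(n) := \bigcup\{ W_t : t \text{ fixes the first } n \text{ free coordinates of } T_\omega \}$. Since there are at most $2^n$ such $t$ and each $W_t$ has size at most $2^{kn}$, we get $|\tau(n)| \leq 2^n \cdot 2^{kn} = 2^{(k+1)n}$, so $\tau \in \Gamma_{k+1}$, and as $\tau$ is assembled from the $W_t$'s it lies in $\model{N}$. By construction $T_\omega \force \dot{\sigma}(n) \subseteq \tau(n)$ for every $n$, whence $T_\omega \force \forall n\,(h_x(n) \in \tau(n))$. But the statement ``$h_x(n) \in \tau(n)$ for all $n$'' mentions only the ground-model objects $h_x$ and $\tau$, so being forced by $T_\omega$ it already holds in $\model{N}$; thus $\tau \in \Gamma_{k+1} \cap \model{N}$ witnesses that $x$ is not unreachable over $\model{N}$, a contradiction.

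I expect the main obstacle to be the amalgamation step of the fusion: arranging that the finitely many subtrees deciding $\dot{\sigma}(n)$ below the distinct partial stems can be glued into a genuine Silver condition while preserving all previously committed free coordinates. This is exactly the point where the uniform (Silver) structure is indispensable, since one must align the free levels beyond the $n$-th across all $2^n$ stems; once this is organized, the remainder is the routine counting $2^n \cdot 2^{kn} = 2^{(k+1)n}$ that keeps $\tau$ inside $\Gamma$.
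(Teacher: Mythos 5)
Your proposal is correct and follows essentially the same route as the paper's proof: a fusion argument in which, at stage $n$, the at most $2^n$ nodes fixing the first $n$ splitting coordinates each decide $\dot{\sigma}(n)$, the decided values are collected into a ground-model slalom $\tau$ with $|\tau(n)| \leq 2^n \cdot 2^{kn} = 2^{(k+1)n}$, so $\tau \in \Gamma_{k+1} \cap \model{N}$, which contradicts (respectively, invokes) the unreachability of $x$ over $\model{N}$. The amalgamation step you flag as the main obstacle is exactly the point the paper handles by the copying ($\oplus$) technique from its amoeba lemma (Lemma \ref{lemma:amoeba-silver}), and your contradiction-plus-absoluteness framing versus the paper's direct density formulation is only a cosmetic difference.
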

\begin{proof}
It is a standard fusion argument. Given $\sigma \in \Gamma_k \cap \model{N}[z]$ and a condition $T \in \silver$, the idea is to construct a fusion sequence $\langle T_n: n \in \omega \rangle$ and $\tau \in \Gamma_{k+1} \cap \model{N}$ such that the limit of the fusion $T' \force \forall n \in \omega (\sigma(n) \subseteq \tau(n))$. The key-point of the proof is that for every $n \in \omega$, one only has $2^{n}$-many $n+1$st splitting nodes, and therefore $2^{n}$-many possible decisions for $\sigma(n)$. In this way, one can define $\tau(n)$ to be the union of all these possibilities, in order to have $|\tau(n)| \leq 2^{k  n} \cdot 2^{n}= 2^{(k+1)n}$, which gives us $\tau \in \Gamma_{k+1} \cap \model{N}$.

More formally: \textsc{Step 0}: Let $T^0 \leq T$ such that $T^0 \force \sigma(0) = \tau(0)$, for some sigleton $\tau(0) \subset \omega$. \textsc{Step $n+1$}: Let $\{ t_j : j < 2^{n+1} \}$ be an enumeration of the set $\{ {t_m}^\conc i: t_m \in \splitting_{n+1}(T^{n}) \land i=0,1 \}$. For any $j < 2^{n+1}$, one can find $T^{n+1}_j \leq T^{n}_{t_j}$ and $E^{n+1}_j$ of size $\leq 2^{k(n+1)}$ such that $T^{n+1}_j \force \sigma(n+1)= E^{n+1}_j$. Note also that by using an argument as in the proof of lemma \ref{lemma:amoeba-silver} one can uniformly pick those $T^{n+1}_j$'s, in order to obtain a Silver tree $T^{n+1} := \bigcup \{ T^{n+1}_j: j < 2^{n+1} \}$. If we now put $\tau(n+1)= \bigcup \{ E^{n+1}_j: j < 2^{n+1} \}$ we then get $T^{n+1} \leq_{n} T^{n}$, $|\tau(n+1)| \leq 2^{n+1} \cdot 2^{k(n+1)}= 2^{(k+1)(n+1)}$ and $T^{n+1} \force \sigma(n+1) \subseteq \tau(n+1)$ (where $S \leq_n T$ means $S \leq T$ and $\splitting_{n+1}(S)=\splitting_{n+1}(T)$.)

Finally put $T'= \bigcap_{n \in \omega} T_n$, for every $n \in \omega$. Hence $\tau \in \Gamma_{k+1} \cap \model{N}$, $T' \leq T$ and $T' \force \forall n \in \omega (\sigma(n) \subseteq \tau(n))$.

\end{proof}

\paragraph{Amoeba for Silver.}
Proofs involving regularity properties need the right notion of amoeba, i.e., a particular forcing notion to add a \emph{large} set of \emph{generic} reals, where the precise meaning of the two italic-style words depend on the notion of regularity we are dealing with.

\begin{definition}
\[
\asilver= \{ (p, T): T \in \silver \text{ and } p= T \restric n, \text{ for some } n \in \omega  \},
\]
ordered by
\[
(p',T') \leq (p,T) \Leftrightarrow T' \subseteq T \land  T' \restric \height(p)= T \restric \height(p).
\]
\end{definition}
We aim at showing that this forcing adds \emph{many} Silver reals, more precisely,
\begin{equation} \label{eq:VTgeneric}
 \force_{\asilver} \forall T \in \silver \cap \model{N} \exists T' \subseteq T (T' \in \silver \land [T'] \subseteq \silver(\model{N})),
\end{equation}
where we remind that $\silver(\model{N})$ denotes the set of Silver reals over the ground model $\model{N}$.

First of all, we prove the following preliminary fact.
\begin{lemma} \label{lemma:amoeba-silver}
 Let $T_G = \bigcup \{ p : \exists T((p, T) \in G) \}$, where $G$ is $\asilver$-generic over the ground model. Then  $\model{N}[G] \models  [T_G] \subseteq \silver(\model{N})$.
\end{lemma}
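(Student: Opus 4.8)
The plan is to verify two things: that $T_G$ is a genuine Silver tree, and that every $z \in [T_G]$ is Silver-generic over $\model{N}$. The first is routine bookkeeping. The frozen parts $p$ of the conditions in $G$ form a directed family of finite uniform trees, so their union $T_G$ is a tree; the set $\{(p,T) : \height(p) \geq n\}$ is dense (given $(p,T)$, pass to $(T\restric(\height(p)+1),\, T) \leq (p,T)$), so $T_G$ is infinite, and since each frozen part is the truncation of a Silver tree the uniformity of the splitting pattern is inherited level by level. Moreover, if $(p,T) \in G$ then $T_G \subseteq T$, because the trees of the conditions below $(p,T)$ only shrink while their frozen parts agree with $T$ on the first $\height(p)$ levels.

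For the second part I would use the standard characterization of Silver-genericity for arboreal forcings: to show $z \in \silver(\model{N})$ it suffices that the filter $\{S \in \silver : z \in [S]\}$ meet every open dense $D \subseteq \silver$ with $D \in \model{N}$, i.e.\ that for every such $D$ there is $S \in D$ with $z \in [S]$. So fix such a $D$ and set
\[
\Delta_D := \{(p,T) \in \asilver : \forall t \in \term(p)\,(T_t \in D)\}.
\]
If $\Delta_D$ is dense then $G$ meets it in some $(p,T)$; as $T_G \subseteq T$ and $\term(p)$ is a front of $T$, every $z \in [T_G]$ extends some $t \in \term(p)$, whence $z \in [T_t]$ with $T_t \in D$. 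Since $D$ was an arbitrary open dense set in $\model{N}$, each $z \in [T_G]$ is Silver-generic over $\model{N}$, that is $[T_G] \subseteq \silver(\model{N})$.

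Everything therefore reduces to the density of $\Delta_D$, which is the heart of the matter and the place where the uniform structure of Silver trees is essential. Given $(p_0,T_0)$, list $\term(p_0) = \{t_0,\dots,t_{m-1}\}$; by uniformity all $t_j$ sit at the same level and the cones $(T_0)_{t_j}$ share the same splitting pattern. I would build a descending chain $T_0 \supseteq T^{(0)} \supseteq \dots \supseteq T^{(m-1)} =: T$, all agreeing with $T_0$ below level $\height(p_0)$, by processing the $t_j$ one at a time. Having reached $T^{(j-1)}$, use density of $D$ below the Silver tree $(T^{(j-1)})_{t_j}$ to pick $S_j \in D$ with $S_j \subseteq (T^{(j-1)})_{t_j}$, read off the refinement pattern of $S_j$ (which coordinates above $\height(p_0)$ it keeps free, and to which values it fixes the rest), and apply exactly that pattern above every $t_i$ simultaneously, obtaining a Silver tree $T^{(j)}$ with $(T^{(j)})_{t_j} = S_j$. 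This uniform application is legitimate precisely because a Silver tree is determined by its free coordinate set together with the fixed values off it, and by uniformity those data are common to all the cones $(T^{(j-1)})_{t_i}$. Processing the nodes in turn, combined with the openness of $D$, guarantees that each later refinement only shrinks the trees above the earlier $t_i$, so $(T)_{t_i} \subseteq S_i \in D$ stays in $D$. Hence $(p_0,T) \in \Delta_D$ and $(p_0,T) \leq (p_0,T_0)$.

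The hard part is exactly this uniform picking: one cannot refine above the various $t_j$ independently, since Silver trees force the same splitting behaviour across a level, so the refinements must be carried out with a common pattern and interleaved so that openness of $D$ protects the captures already made. This is the same mechanism that will be invoked, for a countable list of tasks and hence via a genuine fusion, in Lemma~\ref{lemma:silver-preserves-unreachable}.
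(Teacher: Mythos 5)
Your argument is correct in its core, and that core is the same as the paper's: the density of your $\Delta_D$ is established in the paper by exactly your mechanism --- shrink the cone above one terminal node of $p_0$ into $D$, copy the resulting refinement pattern uniformly over all the other terminal nodes (the paper writes this with an operation $t \oplus T$), and let openness of $D$ protect the cones already captured. The paper processes the terminal nodes sequentially and copies the final tree everywhere at the end, which is the same bookkeeping as your interleaved version; your observation that $\term(p)$ is a front of $T$ and that $T_G \subseteq T$ for $(p,T) \in G$ also matches how the paper (in remark \ref{remark-amoeba-absoluteness}) reads its own construction.

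The one place where you diverge is not free of charge. You appeal to a ``standard characterization'' by which it suffices that ``the filter'' $H_z=\{S\in\silver\cap\model{N}: z\in[S]\}$ meets every open dense $D\in\model{N}$. Under the paper's definition, a Silver real is the branch determined by a $\silver$-generic \emph{filter}, and for an arbitrary real $z$ the set $H_z$ is not a filter: writing $z$ as the union of its restrictions to the even and to the odd coordinates produces two Silver trees through $z$ whose bodies meet exactly in $\{z\}$, hence are incompatible in $\silver$. So the implication ``$z$ meets every dense open set from $\model{N}$ $\Rightarrow$ $z$ is Silver-generic'' is true, but it needs a proof, and that proof is precisely the last part of the paper's proof of this lemma: if $T_1,T_2\in H_z$ were incompatible, then $[T_1]\cap[T_2]$ would be a finite set of reals, each of them in $\model{N}$, and the set $E$ of Silver trees avoiding these finitely many reals is open dense and lies in $\model{N}$; meeting $E$ contradicts $z\in[T_1]\cap[T_2]$, so any two members of $H_z$ are compatible, and their intersection is again a member of $H_z$, making $H_z$ a generic filter with branch $z$. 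This supplement is short, but without it --- or a precise reference establishing the equivalence for Silver forcing specifically, since it is a forcing-by-forcing fact rather than a general property of arboreal forcings --- your reduction has a gap at exactly the step the paper takes care to prove.
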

\begin{proof}
Fix an open dense $D \subseteq \silver$ and $(p,T) \in \asilver$.
First of all, let $t_0, t_1, \dots, t_k$ be an enumeration of all terminal nodes in $p$. We use the following notation: for any tree $T$ and
$t \in \cantorfin$ such that $|t| \leq |\stem(T)|$,
\[
t \oplus T := \{t \oplus t': t' \in T\}, \index{$\oplus$}
\]
i.e., the tree obtained from $T$ by chancing all nodes to begin as $t$. We aim at uniformly shrinking $T$ to some $T' \in \silver$ so that $(p,T') \force \forall z \in [T_G](H_z \cap D \neq \emptyset)$, where $H_z$ is defined by $H_z= \{ S \in \silver \cap \model{N}: z \in [S]  \}$.
Consider the following construction: \label{fact:silver.silver}
\begin{itemize}
 \item firstly, pick $T^0_{t_0} \subseteq T_{t_0}$ in $D$ and let $T^0_{t_1}= t_1 \oplus T^0_{t_0}$;
 \item then, pick $T^1_{t_1} \subseteq T^0_{t_1}$ in $D$ and let $T^1_{t_2}= t_2 \oplus T^1_{t_1}$; note that $t_0 \oplus T^1_{t_1} \subseteq T^0_{t_0}$
 and so $t_0 \oplus T^1_{t_1} \in D$ as well;
 \item continue this construction for every $j \leq k$ and finally let $T'_{t_j}=t_j \oplus  T^k_{t_k}$, for every $j \leq k$.
\end{itemize}
It follows from the construction that $T':= \bigcup \{ T'_{t_j} : j \leq k \}$ is a Silver tree and, for any $z \in [T_G]$, one has $H_z \cap D \ni T'_{t_j} $, for the appropriate $j \leq k$ such that $t_j \lhd z$. Hence, we have shown that for every branch $z \in [T_G]$, $H_z \cap D \neq \emptyset$.

It is left to show that the set $H_z$ is a filter. Pick $T_1, T_2 \in H_z$ incompatible (note that by absoluteness they are incompatible in $\model{N}$ as well). Hence, $[T_1] \cap [T_2]$ is finite, i.e., $[T_1] \cap [T_2]= \{ x_i: i \leq n \}$. Then $E:= \{ T \in \silver: \forall i \leq n (x_i \notin [T])  \}$ is open dense set in the ground model \model{N}, and so, by genericity, there is $T \in E$ such that $z \in [T]$, which contradicts $T_1, T_2 \in H_z$ (and so $z \in [T_1] \cap [T_2]$). 
\textbf{N.B.}: by absoluteness, this argument works when $z$ belongs not only to $\model{N}[G]$, but to any ZFC-model  $ \model{M} \supseteq \model{N}[G]$ (see also remark \ref{remark-amoeba-absoluteness} coming). 

\end{proof}

\begin{remark} \label{remark-amoeba-absoluteness}
The forcing we have just introduced is an amoeba in a strong sense, which means that the tree added by $\asilver$ is a Silver tree of Silver reals in any ZFC-model $\model{M} \supseteq \model{N}[G]$, where $G$ is $\asilver$-generic over the ground model $\model{N}$. The method for proving that is essentially the same used by Spinas in \cite{Sp95} about an analogous result for an amoeba of Laver. In fact, if we look at the proof of \ref{lemma:amoeba-silver}, we actually show that, for every open dense set $D \subseteq \silver$ of the ground model there exists a front $F \subseteq T_G$ such that for every $t \in F$, $(T_G)_t \in D$. Since  being a front is a $\Pi^1_1$-property, by absoluteness, it exists in any ZFC-model $\model{M} \supseteq \model{N}[G]$. It therefore follows that our argument works even if $z \in [T_G]$ comes from any ZFC-model $\model{M} \supseteq \model{N}[G]$. In other words, for any ZFC-model $\model{M} \supseteq \model{N}[G]$, $\model{M} \models [T_G] \subseteq \silver(\model{N})$.
\end{remark}

It is left to show that this forcing actually adds such a generic Silver tree inside any Silver tree of the ground model. To this aim, fix any $S \in \silver \cap \model{N}$, and consider the forcing $\asilver_{S}$ defined as $\asilver_{S} := \{ (p,T) \in \asilver : T \subseteq S \}$, with the analogous order. It is therefore clear that we can similarly show that any branch through the generic $T_G$ added by $\asilver_{S}$ is Silver generic, and obviously $T_G \subseteq S$. Furthermore, by using the standard $\unlhd$-preserving bijection between $S$ and $\cantor$, one can easily note that $\asilver_{S}$ is forcing equivalent to $\asilver$, actually really isomorphic, and therefore we obtain (\ref{eq:VTgeneric}). 

Finally remark \ref{remark-amoeba-absoluteness} gives also the following corollary.
\begin{corollary} \label{absolute-amoeba}
For every ZFC-model $\model{M} \supseteq \model{N}[G]$
\[
\model{M} \models \forall T \in \silver \cap \model{N} \exists T' \subseteq T (T' \in \silver \land [T'] \subseteq \silver(\model{N})).
\]
\end{corollary}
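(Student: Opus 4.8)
The plan is to read the corollary as the upward-absolute form of equation (\ref{eq:VTgeneric}), whose truth in $\model{N}[G]$ has already been secured, and to locate the real work entirely in the transfer of genericity to $\model{M}$. First I would fix an arbitrary ZFC-model $\model{M} \supseteq \model{N}[G]$ and an arbitrary $T \in \silver \cap \model{N}$, and then produce the witnessing subtree already inside $\model{N}[G]$, so that it is automatically an object of $\model{M}$. For this I would use the localization $\asilver_T \cong \asilver$ discussed just before the corollary: the isomorphism $j$ lies in $\model{N}$, so reinterpreting the $\asilver$-generic $G$ through it gives an $\asilver_T$-generic filter $G_T = j^{-1}[G]$ over $\model{N}$, and hence a Silver tree $T' := T_{G_T} \subseteq T$ with $T' \in \silver \cap \model{N}[G]$. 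Here the inclusion $T' \subseteq T$ is immediate, since every condition of $\asilver_T$ has second coordinate contained in $T$.

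It then remains to check that $\model{M}$ satisfies the matrix $T' \subseteq T \land T' \in \silver \land [T'] \subseteq \silver(\model{N})$, and I would separate the three conjuncts. The assertions $T' \subseteq T$ and $T' \in \silver$ are arithmetical (indeed Borel) in the codes of $T'$ and $T$, hence absolute between $\model{N}[G]$ and $\model{M}$, so nothing needs to be verified for them. The only delicate conjunct is $[T'] \subseteq \silver(\model{N})$, because ``being Silver-generic over $\model{N}$'' quantifies over all dense subsets of $\silver$ belonging to the ground model and is not prima facie preserved upward. This is exactly the situation handled by Remark \ref{remark-amoeba-absoluteness}, which I would apply to $\asilver_T$ (equivalently, transport it through $j$). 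The key reformulation, established in Lemma \ref{lemma:amoeba-silver}, is that for every open dense $D \subseteq \silver$ in $\model{N}$ there is a front $F \subseteq T'$, lying in $\model{N}[G]$, with $(T')_t \in D$ for every $t \in F$. Since ``$F$ is a front in $T'$'' is a $\Pi^1_1$ statement in the parameters $F, T'$, Mostowski absoluteness guarantees that $F$ remains a front in $\model{M}$; consequently, for any $z \in [T']$ computed in $\model{M}$ and any ground-model dense $D$, there is $t \in F$ with $t \unlhd z$, whence $(T')_t \in H_z \cap D$. Combined with the N.B. in Lemma \ref{lemma:amoeba-silver} that $H_z$ is a filter in any ZFC-model extending $\model{N}[G]$, this yields $z \in \silver(\model{N})$ as computed in $\model{M}$.

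I expect the single genuine obstacle to be precisely this upward absoluteness of genericity: a naive argument directly manipulating dense sets of the ground model would not transfer to $\model{M}$, since $\model{M}$ may contain new dense sets and, more to the point, new reals $z \in [T']$. The device that circumvents this is the $\Pi^1_1$ front characterization rather than any quantification over dense sets, and it has already been isolated in Remark \ref{remark-amoeba-absoluteness}; once it is in place the corollary follows by letting $T$ range over all of $\silver \cap \model{N}$, giving $\model{M} \models \forall T \in \silver \cap \model{N}\, \exists T' \subseteq T\,(T' \in \silver \land [T'] \subseteq \silver(\model{N}))$.
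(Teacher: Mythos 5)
Your proposal is correct and takes essentially the same route as the paper: the witnessing tree is produced inside $\model{N}[G]$ via the isomorphism $\asilver_T \cong \asilver$ (the $\unlhd$-preserving bijection, noted just before the corollary), and the transfer of branch-genericity to an arbitrary ZFC-model $\model{M} \supseteq \model{N}[G]$ is obtained exactly through the $\Pi^1_1$ front characterization of remark \ref{remark-amoeba-absoluteness} together with the filter property of $H_z$ from lemma \ref{lemma:amoeba-silver}. The paper simply cites the remark and the localization paragraph without spelling out the absoluteness of the remaining conjuncts, so your write-up is a more detailed rendering of the same argument.
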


\section{Silver without Miller and Lebesgue} \label{silver-miller}
We now have all needed tools to show the main results of the paper, that is to provide a model 
\[
\model{M} \models \vm \wedge \neg \mm \wedge \neg \lm \wedge \forall x \in \baire (\omega_1^{\model{L}[x]} < \omega_1),
\]
We remark that in our proof the use of unreachbility together with the amoeba of Silver shows a new method for separating regularity properties using Shelah's amalgamation. 

We start with an inaccessible $\kappa$ and force to add a non-Miller measurable set $Y$ and a non-Lebesgue measurable set $Z$, and we simultaneously amalgamate over Silver forcing $\silver$, with respect to such $Y$ and $Z$.
The construction will give us a complete Boolean algebra $\algebraQ_\kappa$ forcing  \footnote{Note that, following Solovay's approach, we could equivalently pick $\model{HOD}(\text{On}^\omega,Y,Z)$ as inner model, i.e., the class of all sets that are hereditarily ordinal definable over $\text{On}^\omega \cup \{ Y \} \cup \{ Z \}$.} 
\begin{gather*}
\text{``every set of reals in $\model{L}(\baire, Y,Z)$  is Silver measurable,}  \\
\ \ \text{$Y$ is not Miller measurable, $Z$ is not Lebesgue measurable, and} \\
\ \ \text{$\omega_1$ is inaccessible by reals ''}.
\end{gather*}

Observe that we must show not only all sets in $\model{L}(\baire)$
are regular, but all sets in $\model{L}(\baire,Y,Z)$.
Furthermore, intuitively, since we want Silver measurability but \emph{not} Lebesgue and Miller measurability, one should ask a type of homogeneity involving Silver subalgebras of $\algebra{B}_\kappa$ w.r.t. $\dot Y$ and $\dot Z$, but not all $\sigma$-generated subalgebras (since, e.g., fixing $\dot Y$ by Cohen homogeneity would affect the unreachability). More precisely, we want our amalgamation to catch all subalgebras generated by $\algebra{A} \cup b$, where $\algebra{A} \lessdot \algebra{B}_\kappa$ is isomorphic to the Silver algebra and $b \in \algebra{B}_\kappa$. In this spirit, one introduces the following notion. 
\begin{definition}   \label{def:Y.homo}
Let $\algebra{B}$ be a complete Boolean algebra, $\dot{Y}$ and $\dot{Z}$ be $\algebra{B}$-names.
Let $B^+(\silver)$ denote a complete Boolean subalgebra generated by $B(\silver) \cup \{b\}$, for some $b \in \algebra{B}$.
One says that $\algebra{B}$ is $(\silver,\dot{Y},\dot{Z})$-homogeneous if and only if for any isomorphism $\phi_0$
between two complete subalgebras $\algebra{B}', \algebra{B}''$ of $\algebra{B}$,  \index{$(\silver,\dot{Y})$-homogeneous}
such that $\algebra{B}' \approx \algebra{B}'' \approx B^+(\silver)$, there exists
$\phi: \algebra{B} \rightarrow \algebra{B}$ automorphism extending $\phi_0$ such that $\force_{\algebra{B}} \text{`` }\phi (\dot{Y})= \dot{Y} \text{ and } \phi (\dot{Z})= \dot{Z} \text{ ''}$.
\end{definition}

\begin{remark} \label{remark-silver}
Note that for every $b \in \algebra{B}$, one can easily construct a dense embedding between $B(\silver)$ and $B^+(\silver)$, and so they give rise to the same extension.
\end{remark}

So one starts from a ground model $\model{N}$ containing an
inaccessible cardinal $\kappa$. Define a complete Boolean algebra
$\algebraQ_\kappa$ as a direct limit of $\kappa$-many complete Boolean algebras
$\algebraQ_\alpha$'s of size $< \kappa$, such that for every
$\alpha < \gamma < \kappa$, $\algebraQ_\alpha \lessdot
\algebraQ_\gamma$, and one simultaneously constructs two sets
$\dot{Y}$ and $\dot{Z}$ of $\algebraQ_\kappa$-names of reals. We now see in detail such a construction.

\begin{itemize}
 \item Firstly, to obtain the $(\silver,\dot{Y}, \dot{Z})$-homogeneity, we use a standard book-keeping argument as follows:
whenever $\algebra{B}_\alpha \lessdot \algebra{B}' \lessdot \algebra{B}_\kappa$ and $\algebra{B}_\alpha \lessdot \algebra{B}'' \lessdot \algebra{B}_\kappa$ are such that
$\algebra{B}_\alpha$ forces $(\algebra{B}':\algebra{B}_\alpha) \approx (\algebra{B}'':\algebra{B}_\alpha) \approx B^+(\silver)$
and $\phi_0: \algebraQ' \rightarrow \algebraQ''$ an isomorphism s.t. $\phi_0 \upharpoonright \algebra{B}_\alpha= \text{Id}_{\algebra{B}_\alpha}$, then there exists
a sequence of functions in order to extend the isomorphism $\phi_0$ to an automorphism $\phi: \algebra{B}_{\kappa}
\rightarrow \algebra{B}_{\kappa}$, i.e., $\exists \langle \alpha_\eta : \eta
< \kappa \rangle$ increasing, cofinal in $\kappa$, with $\alpha_0=\alpha$, and $ \exists \langle
\phi_\eta : \eta < \kappa \rangle$ such that 
\begin{itemize}
\item for $\eta >0$ successor ordinal, $\algebra{B}_{\alpha_{\eta}+1} =\omega\text{-}\amal(\algebra{B}_{\alpha_{\eta}},\phi_{\eta-1})$,
and $\phi_\eta$ is the automorphism on  $\algebra{B}_{\alpha_\eta+1}$ generated by the amalgamation;
\item  for $\eta$ limit ordinal, let $\algebra{B}_{\alpha_\eta}= \lim_{\xi < \eta} \algebra{B}_{\alpha_\xi}$ and $\phi_\eta= \lim_{\xi < \eta} \phi_\xi$, in the obvious sense;
\item for every $\eta< \kappa$, we have $\algebra{B}_{\alpha_{\eta}+1} \lessdot \algebra{B}_{\alpha_{\eta+1}}$, i.e., $\alpha_\eta+1 < \alpha_{\eta+1}$.
\end{itemize}
Moreover, since one needs to fix the set of names by each automorphism $\phi_\eta$, one puts
\begin{itemize} 
\item successor case $\eta>0$:
\[
\begin{split}
\dot{Y}_{\alpha_{\eta} +1} &:= \dot{Y}_{\alpha_{\eta}} \cup \{
\phi^j_{\eta}(\dot{y}), \phi^{-j}_{\eta}(\dot{y}): \dot{y} \in
\dot{Y}_{\alpha_{\eta}}, j \in \omega \},\\
\dot{Z}_{\alpha_{\eta} +1} &:= \dot{Z}_{\alpha_{\eta}} \cup \{
\phi^j_{\eta}(\dot{z}), \phi^{-j}_{\eta}(\dot{z}): \dot{z} \in
\dot{Z}_{\alpha_{\eta}}, j \in \omega \};\\
\end{split}
\]
\item limit case: $\dot Y_{\alpha_\eta} := \bigcup_{\xi < \eta} \dot Y_{\alpha_\eta}$, and $\dot Z_{\alpha_\eta} := \bigcup_{\xi < \eta} \dot Z_{\alpha_\eta}$. 

\end{itemize}
 \item Secondly, to obtain the Silver measurability of all sets in $\model{L}(\baire,Y,Z)$ together with $Y$ non-Miller measurable and $Z$ non-Lebesgue measurable, one has to add
the following operations into the construction of $\algebra{B}_\kappa$:
\begin{enumerate}
\item for cofinally many $\alpha$'s,
\[
 \algebra{B}_{\alpha+1}= \algebra{B}_\alpha* \dot{\asilver}.
\]
In this case, put $\dot{Y}_{\alpha+1} =
\dot{Y}_{\alpha}$ and $\dot{Z}_{\alpha+1} =
\dot{Z}_\alpha$.
\item for cofinally many $\alpha$'s, $\algebra{B}_{\alpha + 1}=
\algebra{B}_\alpha * \dot{\miller}$ and
\[
\dot{Y}_{\alpha +1}= \dot{Y}_\alpha \cup \{
\dot{y}_T: T \in \miller \},
\]
where $\dot{y}_T$ is a name for a Miller real over
$\model{N}^{\algebra{B}_\alpha}$ through $T \in \model{N}^{\algebra{B}_\alpha}$,
\item for cofinally many $\alpha$'s, $\algebra{B}_{\alpha + 1}=
\algebra{B}_\alpha * \dot{\random}$ and
\[
\dot{Z}_{\alpha +1}= \dot{Z}_\alpha \cup \{
\dot{z}_T: T \in \random \},
\] 
and $z_T$ is a name for a random real through the positive measure tree $T \in \model{N}^{\algebra{B}_\alpha}$.
\item for cofinally many $\alpha$'s we collapse $\alpha$ to $\omega$, i.e., $\algebra{B}_{\alpha + 1}=
\algebra{B}_\alpha * \textbf{Coll}(\omega, \alpha)$, and we let
$\dot{Y}_{\alpha +1}= \dot{Y}_\alpha$ and $\dot{Z}_{\alpha+1} =
\dot{Z}_\alpha$;
\end{enumerate}
 \item Finally, for any limit ordinal $\lambda$, $\dot{Y}_\lambda = \bigcup_{\alpha < \lambda} \dot{Y}_\alpha$, $\dot{Z}_\lambda = \bigcup_{\alpha < \lambda} \dot{Z}_\alpha$ and
$\algebra{B}_\lambda = \lim_{\alpha < \lambda} \algebra{B}_\alpha$.
\end{itemize}

\begin{remark}
Note that $\algebra{B}_\kappa$ is a direct limit of complete Boolean algebras of size $< \kappa$ collapsing $\kappa$ to $\omega_1$ and it is therefore trivially $\kappa$-cc.
At this point the reader could object that this algebra is nothing more than the Levy collapse, and hence the extension we get is the same as Solovay's. How can we then separate regularity properties? The point is that, even if we get the same forcing-extension obtained by Solovay, what we do is to look at a different inner model; after collapsing the inaccessible to $\omega_1$, we pick the inner model $\model{L}(\omega^\omega,Z,Y)$. So this method should be viewed as a technique for choosing the ``suitable'' inner model of Solovay's extension to obtain the required separation of regularity properties. For further observations about that, we refer the reader to the last section, questions 1, 3 and 6.

\end{remark}

The proof of the main theorem splits into the following lemmata.

\begin{lemma}
 Let $G$ be $\algebraQ_\kappa$-generic over $\model{N}$. Then
\[
 \model{N}[G] \models \text{``every set of reals in $\model{L}(\baire,Y,Z)$ is Silver measurable''}.
\]
\end{lemma}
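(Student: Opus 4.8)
The plan is to reduce, via the Brendle--L\"owe criterion (Lemma \ref{lemma:theta-measurability}), the statement to the production of a single homogeneous Silver tree for each set in the inner model, and then to manufacture such a tree out of the amoeba generic by exploiting the $(\silver,\dot Y,\dot Z)$-homogeneity of $\algebra{B}_\kappa$. First I would observe that the collection of sets of reals lying in $\model{L}(\baire,Y,Z)$ is topologically reasonable: it is closed under continuous preimages and intersections with closed sets, since the codes for continuous functions and for closed sets are reals and hence already belong to the inner model. Consequently, by Lemma \ref{lemma:theta-measurability}, it suffices to show that for every $X \in \model{L}(\baire,Y,Z)$ there is a single Silver tree $T$ with $[T] \subseteq X$ or $[T] \cap X = \emptyset$.

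Next I would fix such an $X$ and write $X = \{x \in \cantor : \model{L}(\baire,Y,Z) \models \Phi(x,a,\bar\xi,Y,Z)\}$ for a formula $\Phi$, ordinals $\bar\xi$ and finitely many real parameters. All these real parameters appear by some stage $\alpha < \kappa$ (using the regularity of $\kappa$ to bound the finitely many stages), so that $a \in \model{N}[G_\alpha]$ where $G_\alpha = G \cap \algebra{B}_\alpha$. Since the amoeba $\asilver$ is used at cofinally many stages, I would pick an amoeba stage $\beta \geq \alpha$ and let $T_G$ be the Silver tree it adds. By Corollary \ref{absolute-amoeba}, in $\model{N}[G]$ every branch $x \in [T_G]$ is Silver-generic over $\model{N}[G_\beta]$, and hence, since $\silver \cap \model{N}[G_\alpha] \subseteq \silver \cap \model{N}[G_\beta]$, also Silver-generic over the smaller model $\model{N}[G_\alpha]$. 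The amoeba is exactly the tool needed here: it produces a whole Silver tree of generic reals, which is what Silver measurability requires, rather than a single generic point.

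The heart of the argument is then to show that all Silver reals over $\model{N}[G_\alpha]$ share the same membership status with respect to $X$; since every branch of $T_G$ is such a real, this immediately gives $[T_G] \subseteq X$ or $[T_G] \cap X = \emptyset$. Given two Silver reals $x_0,x_1$ over $\model{N}[G_\alpha]$, I would work in $\model{N}[G_\alpha]$: each $x_i$ generates over $\algebra{B}_\alpha$ a complete subalgebra isomorphic to $B^+(\silver)$ (harmless by Remark \ref{remark-silver}, as it is equivalent to $B(\silver)$), and there is a canonical isomorphism $\phi_0$ between these two subalgebras fixing $\algebra{B}_\alpha$ pointwise and sending the name of $x_0$ to the name of $x_1$. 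The $(\silver,\dot Y,\dot Z)$-homogeneity secured by the book-keeping (Definition \ref{def:Y.homo}) extends $\phi_0$ to an automorphism $\phi$ of $\algebra{B}_\kappa$ with $\phi \restric \algebra{B}_\alpha = \mathrm{Id}$ and $\force_{\algebra{B}_\kappa} \phi(\dot Y) = \dot Y \wedge \phi(\dot Z) = \dot Z$. Applying $\phi$ to $G$ yields another generic filter computing the very same $Y$, $Z$, $\baire$ and the same parameter $a$, but interchanging $x_0$ and $x_1$; as $\model{L}(\baire,Y,Z)$ and the interpretation of $\Phi$ depend only on this fixed data, one concludes $\Phi(x_0,a,\bar\xi,Y,Z) \leftrightarrow \Phi(x_1,a,\bar\xi,Y,Z)$, i.e. $x_0 \in X \leftrightarrow x_1 \in X$. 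Combined with the previous paragraph this makes $[T_G]$ homogeneous for $X$, and Lemma \ref{lemma:theta-measurability} finishes the proof.

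The step I expect to be the main obstacle is exactly this last invariance: converting the Boolean-algebraic automorphism fixing the names $\dot Y,\dot Z$ into genuine invariance of truth in $\model{L}(\baire,Y,Z)$ --- in effect re-deriving the Solovay reflection property in its parametrized, $Y,Z$-relative form --- together with verifying that the subalgebra generated over $\algebra{B}_\alpha$ by a branch of $T_G$ really has the shape $B^+(\silver)$ that the book-keeping is designed to catch, with the connecting isomorphism fixing $\algebra{B}_\alpha$ (and therefore $a$). By contrast, the reduction through Lemma \ref{lemma:theta-measurability}, the location of a suitable amoeba stage $\beta \geq \alpha$, and the absoluteness of ``$[T_G] \subseteq \silver(\model{N}[G_\alpha])$'' supplied by Corollary \ref{absolute-amoeba} should be comparatively routine.
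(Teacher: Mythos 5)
Your outer scaffolding (reduction via Lemma \ref{lemma:theta-measurability}, absorption of parameters at some stage, use of an amoeba stage and Corollary \ref{absolute-amoeba}) matches the paper, but the step you yourself call the ``heart'' is not merely the main obstacle --- it is false as stated, and the automorphism swap cannot establish it. Not all Silver reals over $\model{N}[G_\alpha]$ share the same membership status in $X$: take $X=\{x \in \cantor : x(0)=0\}$, which certainly lies in $\model{L}(\baire,Y,Z)$; if $x$ is Silver over $\model{N}[G_\alpha]$, so is $x$ with its first bit flipped, so both statuses occur among Silver generics. To see where your argument breaks, put $b_i=\Vert\Phi(\dot x_i,\dot a,\dot Y,\dot Z)\Vert$ (the formula relativized to the inner model). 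Your automorphism gives $\phi(b_0)=b_1$, and the forcing theorem gives $x_i\in X \ifif b_i\in G$; to conclude $x_0\in X\ifif x_1\in X$ you would need $b_0\in G\ifif\phi(b_0)\in G$, i.e.\ that the particular filter $G$ is $\phi$-invariant, which is neither available nor true. Passing to $\phi[G]$ does not help: indeed $\model{N}[\phi[G]]=\model{N}[G]$ with the same $Y,Z,a$, but under $\phi[G]$ the name $\dot x_1$ evaluates back to the old real $x_0$, so this route only returns the tautology $x_0\in X\ifif x_0\in X$. What the $(\silver,\dot Y,\dot Z)$-homogeneity of Definition \ref{def:Y.homo} actually buys --- and this is how the paper proceeds --- is the Solovay property: for a \emph{single} name $\dot x$ of a Silver real, $\Vert\Phi(\dot x,\dot Y,\dot Z)\Vert$ lies in $\algebra{B}^*_{\dot x}\cong B(\silver)$, hence corresponds to a Silver-algebra element $b$, and every Silver generic $x$ then satisfies $\Phi(x,Y,Z)\ifif x\in b$. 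So membership in $X$ is governed by the condition $b$, not constant; consequently the homogeneous tree cannot be an arbitrary amoeba tree $T_G$, but must be extracted \emph{below} $b$ (or below a condition deciding $\neg\Phi$), using exactly the clause of Corollary \ref{absolute-amoeba} that produces a generic Silver tree inside any prescribed ground-model Silver tree --- a clause your sketch never invokes.

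There is a second, independent error: ``Silver-generic over $\model{N}[G_\beta]$ implies Silver-generic over $\model{N}[G_\alpha]$'' does not follow from $\silver\cap\model{N}[G_\alpha]\subseteq\silver\cap\model{N}[G_\beta]$, because dense subsets of the smaller model's Silver forcing need not be predense in the larger one. Concretely, let $c\in(\cantor\cap\model{N}[G_\beta])\setminus\model{N}[G_\alpha]$ and let $T_c\in\silver\cap\model{N}[G_\beta]$ be the Silver tree whose even levels are non-splitting with values dictated by $c$. The set $D\in\model{N}[G_\alpha]$ of Silver trees having only finitely many even splitting levels or only finitely many odd splitting levels is dense in $\silver\cap\model{N}[G_\alpha]$; but for every $S\in D$ the set $[S]\cap[T_c]$ is either empty or finite and contained in $\model{N}[G_\beta]$ (agreement of the fixed values of $S$ with $c$ on cofinitely many even levels would put $c$ into $\model{N}[G_\alpha]$). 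Hence no real of $[T_c]$ outside $\model{N}[G_\beta]$ meets $D$, and below an amoeba condition $(p,T_c)$ \emph{no} branch of $T_G$ is Silver over $\model{N}[G_\alpha]$. The repair is the one the paper makes: do not separate the two stages; take as base model $\model{N}^*=\model{N}[G\restric\beta]$ where $\beta\geq\alpha$ is itself an amoeba stage (such stages are cofinal), note that the parameters lie in $\model{N}^*$ and that the book-keeping gives $(\silver,\dot Y,\dot Z)$-homogeneity of the tail over \emph{every} stage, and then run the Solovay-property argument entirely over $\model{N}^*$, where Corollary \ref{absolute-amoeba} supplies trees of reals that are Silver-generic over $\model{N}^*$ itself.
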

 \begin{proof}
Fix arbitrarily $X \subseteq \cantor$, $\Phi$ and $r \in \baire$ such that $X=\{ x \in \cantor: \Phi(x,r,Y,Z) \}$. \footnote{Note that the argument works even if we start with $r \in \text{On}^\omega$. Hence, a similar proof actually holds for $X \in \text{HOD}(\text{On}^\omega, \{ Y \}, \{ Z \})$, as mentioned before. Furthermore, we remark that $\Phi$ will have (suppressed) ordinal parameters.}
Let $\alpha < \kappa$ be such that $r \in \model{V}[G \restric \alpha+1]$ and $\algebraQ_{\alpha+1}= \algebraQ_\alpha * \dot{\poset{VT}}$.
Note that, because of the first point of the construction above,
\begin{equation*} \label{eq4}
 \model{N}[G \restric \alpha+1] \models \text{``$\algebraQ_\kappa / G \restric \alpha+1$ is $(\silver, \dot{Y}, \dot Z)$-homogeneous''}.
\end{equation*}
Let $\model{N}^*= \model{N}[G \restric {\alpha+1}]$, $\algebraQ^* = \algebraQ_\kappa / G \restric {\alpha+1}$ and $H$ be the tail of the generic filter $G$, i.e., $H$ is $\algebraQ^*$-generic over $\model{N}^*$ and $\model{N}^*[H]= \model{N}[G]$.
Since the parameter $r$ has been ``absorbed'' in the ground model, for notational simplicity, from now on we will hide it, without indicating it explicitly within
the formula.
The next step will be to prove the Solovay property for $\Phi$ over Silver reals, which is the content of the next observation.
\begin{fact}
Let $\algebra{B}^*$ be $(\silver, \dot{Y},\dot{Z})$-homogeneous Boolean algebra, $\Phi(x,y,z)$ be a formula with only parameters in the ground model
and $Y, Z$ as parameters, and $\dot{x}$ be a name for a Silver real. Then $\Vert\Phi(\dot{x}, \dot{Y}, \dot{Z})\Vert_{\algebra{B}^*} \in \algebra{B}^*_{\dot{x}}$.
\end{fact}
\begin{proof}[Sketch of the proof] The proof is pretty standard and we give a sketch of it for completeness. To reach a contradiction, assume
$\Vert\Phi(\dot{x}, \dot{Y}, \dot{Z})\Vert_{\algebra{B}^*} \notin \algebra{B}^*_{\dot{x}}$. Let $\algebra{A}$ be the complete Boolean algebra generated by
$\algebra{B}^*_{\dot{x}} \cup \Vert\Phi(\dot{x}, \dot Y, \dot{Z})\Vert_{\algebra{B}^*}$. It is well-known that there exists $\rho: \algebra{A} \rightarrow \algebra{A}$
automorphism such that
$\rho(\Vert\Phi(\dot{x}, \dot Y, \dot Z)\Vert_{\algebra{B}^*}) \neq \Vert\Phi(\dot{x}, \dot Y, \dot Z)\Vert_{\algebra{B}^*}$ and $\rho$ is the identity over $\algebra{B}^*_{\dot{x}}$.
By $(\silver, \dot{Y}, \dot Z)$-homogeneity, there exists $\phi: \algebra{B}^* \rightarrow \algebra{B}^*$ automorphism extending $\rho$ such that
$\force_{\algebra{B}^*} \text{`` }\phi(\dot{Y})= \dot{Y} \text{ and } \phi(\dot{Z})= \dot{Z} \text{ ''}$ . Hence, the following equalities yield a contradiction:
\[
\begin{split}
 \rho(\Vert \Phi(\dot{x},\dot Y, \dot Z)\Vert_{\algebra{B}^*}) &= \phi(\Vert\Phi(\dot{x}, \dot Y, \dot Z)\Vert_{\algebra{B}^*}) \\
 													   &=\Vert\Phi(\phi(\dot{x}), \phi(\dot Y), \phi(\dot Z))\Vert_{\algebra{B}^*} \\
 													   &= \Vert\Phi(\dot{x}, \dot Y, \dot Z)\Vert_{\algebra{B}^*}.
\end{split}
\]
\end{proof}

\begin{fact} \label{fact:silver-char}
Let $\dot x$ be a $\algebra{B}^*$-name such that $\force_{\algebra{B}^*} \text{``$\dot x$ is a Silver real over $\model{N}^*$''}$,
and assume for every $b \in B(\silver)$, $\Vert \dot x \in b\Vert_{\algebra{B}^*} \neq \0$. Then there exists an isomorphism
\[
f: B(\silver) \rightarrow \algebra{B}^*_{\dot x}, \text{ such that } \force_{\algebra{B}^*} f(\dot v)= \dot x,
\]
where $\dot v$ is the canonical name for the Silver real.

\emph{(Hint: choose $f(b)= \Vert \dot{x} \in b \Vert_{\algebra{B}^*}$, for every $b \in B(\silver)$)}.
\end{fact}

Now let $\dot{x}_0$ be a name for a Silver real and assume $A=\Vert \Phi(\dot x_0,\dot{Y}, \dot Z) \Vert_{\algebra{B}^*} \neq \0$.
Hence, because of \ref{fact:silver-char}, together with $(\silver,\dot{Y}, \dot Z)$-homogeneity, one can consider $b=f^{-1}[A]$, with $b \in \algebra{B}(\silver)$.
The next observation is simply a version of Solovay's lemma, stated
for Silver reals in place of Cohen reals (for a proof, see \cite{BJ95}, lemma 9.8.5).
\begin{fact}
Suppose $\model{N}^*[H] \models \text{``$x$ is a Silver real over $\model{N}^*$''}$. Then
\[
 \model{N}^*[H] \models \text{``}  x \in b \ifif \Phi(x,Y,Z) \text{''}.
\]
\end{fact}

By remark \ref{remark-amoeba-absoluteness} and corollary \ref{absolute-amoeba}, one can pick a Silver tree $T \in \model{N}^*[H]$ such that $[T] \subseteq b$ and every $x \in [T]$ is Silver over $\model{N}^*$.
Hence, one obtains
\[
 \model{N}^*[H] \models \forall x \in [T] ( \Phi(x,Y,Z)),
\]
which precisely means $\model{N}[G] \models [T] \subseteq X$.

It is left to show the case $\Vert \Phi(\dot x, \dot Y, \dot Z)\Vert_{\algebraQ^*}=\0$.
In this case, $\Vert \neg \Phi(\dot x, \dot Y ,\dot Z)\Vert_{\algebraQ^*} \neq \0$ and then, arguing in the same way,
one gets a Silver tree $T \in \model{N}^*[H]$ such that $\model{N}^*[H] \models \forall x \in [T] (\neg \Phi (x,Y,Z))$, and therefore
\[
 \model{N}[G] \models [T] \cap X = \emptyset.
\]
\end{proof}

\begin{lemma} \label{lemma: Znotlebesgue}
Let $G$ be a $\algebra{B}_\kappa$-generic filter over
$\model{N}$. Then
\[
\model{N}[G] \models \text{`` $Z$ is not Lebesgue measurable ''}.
\]
\end{lemma}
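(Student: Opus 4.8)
The plan is to show that $Z$ has full outer measure while its complement is non-null; since a measurable set of full outer measure is conull, these two facts are jointly contradictory and hence refute the measurability of $Z$. Concretely, if $Z$ were Lebesgue measurable then $\mu^*(Z)=1$ would give $\mu(Z)=1$, whence $\mu(\cantor\setminus Z)=0$, contradicting $\mu^*(\cantor\setminus Z)>0$. Throughout I use the structural fact that every real (and every Borel code) of $\model{N}[G]$ already belongs to some $\model{N}^{\algebra{B}_\alpha}$ with $\alpha<\kappa$, and that the random stages of the construction are cofinal in $\kappa$.

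For the full outer measure of $Z$, I exploit that random reals are placed into $\dot Z$ cofinally through \emph{every} positive-measure tree. Fix a positive-measure Borel set $B$ and an arbitrary null Borel set $N$, both coded in $\model{N}[G]$. Their codes appear at some stage $\alpha$, so I may pick a random stage $\beta>\alpha$ with $\algebra{B}_{\beta+1}=\algebra{B}_\beta*\dot{\random}$ and $g_\beta$ the random generic over $\model{N}^{\algebra{B}_\beta}$. Since $B\setminus N$ has positive measure and is coded in $\model{N}^{\algebra{B}_\beta}$, it contains a positive-measure tree $T\in\model{N}^{\algebra{B}_\beta}$; writing $h_T\colon\cantor\to[T]$ for the canonical measure isomorphism, the real $h_T(g_\beta)\in[T]\subseteq B\setminus N$ is random over $\model{N}^{\algebra{B}_\beta}$ and, by construction, equals $(\dot z_T)^G\in Z$. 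Thus $Z\cap B\not\subseteq N$ for every null $N$, i.e.\ $\mu^*(Z\cap B)=\mu(B)$; taking $B=\cantor$ yields $\mu^*(Z)=1$.

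The hard direction is to prove that $\cantor\setminus Z$ is non-null, equivalently that $Z$ contains no conull Borel set. Fix a conull Borel $C$, coded at stage $\alpha$, and a random stage $\beta>\alpha$ with generic $g_\beta$. The idea is to produce a random real over $\model{N}^{\algebra{B}_\beta}$ lying in $C$ but \emph{not} enumerated into $Z$, using a measure-preserving reinterpretation of $g_\beta$ that is not among the canonical maps $h_T$: for instance the even-bit real $g_\beta^{\mathrm{ev}}$ (the restriction of $g_\beta$ to the even coordinates). It is random over $\model{N}^{\algebra{B}_\beta}$, hence lies in $C$, but it generates a strictly smaller extension, $\model{N}^{\algebra{B}_\beta}[g_\beta^{\mathrm{ev}}]\subsetneq\model{N}^{\algebra{B}_\beta}[g_\beta]$, because the odd bits remain random over it. First I check $g_\beta^{\mathrm{ev}}\neq h_T(g_{\beta'})$ for every random stage $\beta'$ and tree $T$: for $\beta'=\beta$ this holds since $h_T(g_\beta)$ and $g_\beta$ are interdefinable over $\model{N}^{\algebra{B}_\beta}$ (so they generate the same extension) whereas $g_\beta^{\mathrm{ev}}$ does not; for $\beta'>\beta$ the real $h_T(g_{\beta'})$ is not in $\model{N}^{\algebra{B}_{\beta'}}$ while $g_\beta^{\mathrm{ev}}\in\model{N}^{\algebra{B}_{\beta+1}}\subseteq\model{N}^{\algebra{B}_{\beta'}}$; and for $\beta'<\beta$ the real $h_T(g_{\beta'})$ lies in $\model{N}^{\algebra{B}_\beta}$ while $g_\beta^{\mathrm{ev}}$ is random over that model.

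The remaining, and main, obstacle is to rule out that $g_\beta^{\mathrm{ev}}$ coincides with one of the automorphism images $\phi_\eta^{\pm j}(\dot z)$ explicitly added to $\dot Z$ at the homogeneity stages. Here I intend to use the homogeneity equation $\force_{\algebra{B}_\kappa}\phi_\eta(\dot Z)=\dot Z$ together with a mutual-genericity and absoluteness argument: choosing the witnessing random real generic over a model that already contains all the data needed to enumerate the relevant fragment of $\dot Z$, one shows that the $\phi_\eta$-images of the random reals in $Z$ remain reals ``hereditarily generic'' in a sense incompatible with equalling the fixed, information-deficient real $g_\beta^{\mathrm{ev}}$. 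Granting this, $g_\beta^{\mathrm{ev}}\in C\setminus Z$; since $C$ was an arbitrary conull Borel set, $Z$ contains no conull Borel set, so $\cantor\setminus Z$ is non-null, and combined with $\mu^*(Z)=1$ this shows $Z$ cannot be Lebesgue measurable.
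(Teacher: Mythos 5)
Your overall strategy (full outer measure of $Z$ plus non-nullity of the complement) is equivalent to the paper's formulation, and your first half --- pushing random reals into $Z$ through every positive-measure tree coded at some stage, using that the random stages are cofinal --- is essentially the paper's argument for $Z \cap [S] \neq \emptyset$. Your witness for the second half is also a reasonable variant: where the paper uses a random real $g$ added at a collapse stage $\algebra{B}_{\gamma+1}=\algebra{B}_\gamma * \textbf{Coll}(\omega,\alpha)$, at which $\dot{Z}_{\gamma+1}=\dot{Z}_\gamma$ is explicitly frozen, you use the even-bit real $g_\beta^{\mathrm{ev}}$ of the stage-$\beta$ random generic. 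Both are reals lying in an intermediate model that are random over the model where the given Borel set is coded, and your arguments that $g_\beta^{\mathrm{ev}}$ differs from the reals placed into $Z$ at random stages, and from all reals in $\dot{Z}_\beta$, are fine (granting the canonical, interdefinable choice of the names $\dot{z}_T$, which the paper's own argument does not even need).

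However, the step you defer --- ruling out that your witness equals one of the automorphism images $\phi_\eta^{\pm j}(\dot{z})$ added to $\dot{Z}$ at the amalgamation stages --- is not a technicality: it is the entire content of the lemma, and your sketch of it (``mutual genericity and absoluteness'', reals ``hereditarily generic in a sense incompatible with equalling'' the witness) is not an argument. A priori $\phi_\eta^{\pm j}(\dot{z})$ is just some $\algebra{B}_{\alpha_\eta+1}$-name, and its evaluation can be any real of $\model{N}[G\restric \alpha_\eta +1]$, including a real of the intermediate model containing your witness. The homogeneity equation $\force_{\algebra{B}_\kappa} \phi_\eta(\dot{Z})=\dot{Z}$ gives no control here: fixing $\dot{Z}$ setwise says nothing about which individual reals its members evaluate to, and that equation is the engine of the positive (Silver-measurability) half of the theorem, not of the non-measurability half. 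This is exactly why the paper develops unreachability: random reals are unreachable (remark \ref{remark:random-unreachable}), unreachability is preserved by the amalgamation (lemma \ref{lemma:preserving-unreachability}, fact \ref{lemma:preserve-unreachable}, corollary \ref{corollary:Z}) and by the Silver-type subalgebras being amalgamated (lemma \ref{lemma:silver-preserves-unreachable}), so an induction along the construction (lemma \ref{lemma:Z-unreachable}) shows that every real entering $Z$ after stage $\gamma$ is unreachable over $\model{N}[G\restric \gamma+1]$ --- whereas any real belonging to that model, such as the paper's $g$ or your $g_\beta^{\mathrm{ev}}$, is trivially reachable over it. Without this invariant (or some equally robust property preserved by both the amalgamation and the Silver stages), your proof cannot be completed; with it, your witness would work just as well as the paper's.
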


\begin{proof}
In $\model{N}[G]$, we aim at showing that for every tree $S$ with positive measure,
both
\[
Z \cap [S] \neq \emptyset \text{ and } [S]
\nsubseteq Z.
\]
Let $\dot{S}$ be a
$\algebra{B}_\kappa$-name. There is $\alpha < \kappa$
such that $\dot{S}$ is a $\algebra{B}_\alpha$-name,
$\algebra{B}_{\alpha +1}=\algebra{B}_\alpha
* \dot{\random}$ and
$\dot{Z}_{\alpha+1}=\dot{Z}_\alpha \cup \{
\dot{z}_T: T \in \random \}$. Consider $\dot{z}_{S}$ name for a random
real over $\model{N}[G \restric
\alpha+1]$ such that $\model{N}[G] \models \dot{z}_S^G \in [S]$. Thus,
\[
\model{N}[G] \models \dot{z}_{S}^G \in Z \cap [S].
\]
On the other hand, there is also $\gamma < \kappa$, such that
$\dot{S}$ is a $\algebra{B}_\gamma$-name,
$\algebra{B}_{\gamma+1}= \algebra{B}_{\gamma}* \textbf{Coll}(\omega,\alpha)$
and $\dot{Z}_{\gamma+1}=\dot{Z}_\gamma$. Let
$\dot{g}$ be a name for a random real
over $\model{N}[G \restric \gamma+1]$ (added by the Levy collapse) such that
$\model{N}[G] \models \dot{g}^G \in [S]$. Obviously, $\model{N}[G] \models
\dot{g}^G \notin Z_{\gamma}$, since it is added at
stage $\gamma+1$, and thus,
\[
\model{N}[G] \models \dot{g}^{G} \in [S] \setminus
\dot{Z}_{\gamma+1}^{G},
\]
since
$\dot{Z}_{\gamma+1}=\dot{Z}_{\gamma}$.
It is left to show that $\model{N}[G] \models \dot{g}^{G}
\notin  Z \setminus
\dot{Z}_{\gamma+1}^{G}$. This follows from the following result.
\begin{lemma} \label{lemma:Z-unreachable}
For every $\gamma, \beta < \kappa$, $\gamma < \beta$, and $\dot{x} \in \dot{Z}_\beta \setminus \dot{Z}_\gamma$, one has
\[
\model{N}[G] \models \text{`` }\dot{x}^G \text{ is unreacheable over } \model{N}[G \restric \gamma+1]\text{ ''}.
\]
\end{lemma}

\begin{proof}[Proof of lemma \ref{lemma:Z-unreachable}] The proof is by induction on $\beta < \kappa$.

\textsc{Limit Case}: if $\beta$ is limit, then the result is completely trivial, since, given $\dot{x} \in \dot{Z}_\beta \setminus \dot{Z}_\gamma$, it follows that there exists
$\beta' < \beta$ such that $\dot{x} \in \dot{Z}_{\beta'} \setminus \dot{Z}_\gamma$, and so one can simply apply the inductive hypothesis for $\beta'$ in order to get
$\model{N}[G] \models \text{``}\dot{x}^G \text{ is unreachable over } \model{N}[G \restric \gamma+1]\text{''}$.

\textsc{Successor Case}: $\beta= \gamma+1$, i.e., $\dot{x} \in \dot{Z}_{\gamma+1} \setminus \dot{Z}_\gamma$. Two cases are possible:

\textsc{Subcase 1}: $\dot{Z}_{\gamma+1}= \dot{Z}_\gamma \cup \{\dot{z}_S: S \in \random \}$. In this case $\dot{x}$ has to be a random real over $\model{N}[G \restric \gamma+1]$ and therefore unreachable over it, because of remark \ref{remark:random-unreachable}.

\textsc{Subcase 2}: $\dot{Z}_{\gamma+1}= \dot{Z}_\gamma \cup \{ \phi^j(\dot{z}), \phi^{-j}(\dot{z})   : \dot{z} \in \dot{Z}_\gamma, j \in \omega \}$, where $\gamma = \alpha_{\eta}$ and $\phi= \phi_{\eta}$, for $\eta > 0$. First note that we have the following result, which is analog to lemma 3.4 in \cite{JR93}.

\begin{fact} \label{lemma:preserve-unreachable}
Let $\eta > 0$ be a successor ordinal.  Let $\algebra{B}', \algebra{B}'' \lessdot \algebra{B}_{\alpha_\eta}$ and $\dot{x} \in \model{N}^{\algebra{B}_{\alpha_\eta}} \cap \cantor$
such that
\[
 \force_{\algebra{B}_{\alpha_\eta}} \text{`` $\dot{x}$ is unreachable over both $\model{N}^{\algebra{B}'}$ and $\model{N}^{\algebra{B}''}$''},
\]
and $\psi: \algebra{B}' \rightarrow \algebra{B}''$ isomorphism.

Then, for every $j \in \omega$,
\[
 \force_{\algebra{B}_{\alpha_\eta+1}} \text{`` $\phi^j_\eta (\dot{x})$ and $\phi^{-j}_\eta (\dot{x})$ are unreachable over $\model{N}^{\algebra{B}_{\alpha_\eta}}$''}.
\]
where $\algebra{B}_{\alpha_\eta+1}=\omega$-$\amal(\algebra{B}_{\alpha_\eta}, \psi)$, and $\phi_\eta$ is the automorphism extending $\psi$, generated
by the amalgamation.
\end{fact}
\begin{proof}
The proof simply consists of a recursive application of lemma \ref{lemma:preserving-unreachability}. For an analogous case, one can see the proof of lemma 3.4 in \cite{JR93}.
\end{proof}

\begin{corollary} \label{corollary:Z}
Let $\algebra{B}_{\alpha_0} \lessdot \algebra{B}', \algebra{B}'' \lessdot \algebra{B}_{\alpha_1}$
such that
\[
 \force_{\algebra{B}_{\alpha_0}} \text{`` $(\algebra{B}': \algebra{B}_{\alpha_0}) \approx (\algebra{B}'': \algebra{B}_{\alpha_0}) \approx B^+(\silver)$''}
\]
and $\phi_0: \algebra{B}' \rightarrow \algebra{B}''$ isomorphism such that
$\phi_0 \restric \algebra{B}_{\alpha_0} = \emph{Id}_{\algebra{B}_{\alpha_0}}$. Then for every $\dot{x} \in \model{N}^{\algebra{B}_{\alpha_1}} \cap \cantor$ such that
$\force_{\algebra{B}_{\alpha_1}} \text{``$\dot{x}$ is unreachable over $\model{N}^{\algebra{B}_{\alpha_0}}$''} $, one has, for every $j \in \omega$,
\[
 \force_{\algebra{B}_{\alpha_1+1}} \text{`` $\phi^j_1 (\dot{x})$ and $\phi^{-j}_1 (\dot{x})$ are unreachable over $\model{N}^{\algebra{B}_{\alpha_1}}$''}.
\]
(As usual, $\algebra{B}_{\alpha_{1}+1}$ is the $\omega$-$\amal(\algebra{B}_{\alpha_1},\phi_0)$, and $\phi_1 \supseteq \phi_0$ the automorphism of $\algebra{B}_{\alpha_1+1}$ generated by the amalgamation.)
\end{corollary}
\begin{proof}
First, note that $\algebra{B}_{\alpha_0}$ forces both $(\algebra{B}':\algebra{B}_{\alpha_0}) \approx (\algebra{B}'':\algebra{B}_{\alpha_0}) \approx B^+(\silver)$, and then, by lemma \ref{lemma:silver-preserves-unreachable} and remark \ref{remark-silver}, we obtain
\[
 \force_{\algebra{B}_{\alpha_1}} \text{`` $\dot{x}$ is unreachable over both $\model{N}^{\algebra{B}_{\alpha_0}*(\algebra{B}': \algebra{B}_{\alpha_0})}$ and
$\model{N}^{\algebra{B}_{\alpha_0}*(\algebra{B}'': \algebra{B}_{\alpha_0})}$''}.
\]
To finish the proof, one can then apply fact \ref{lemma:preserve-unreachable}, for $\eta=1$.
\end{proof}
Going back to the proof of Subcase 2, we have two cases:
\begin{itemize}
 \item $\eta=1$, and so $\gamma= \alpha_1$: in such a case either $\dot{x}= \phi^j_1(\dot{z})$ or $\dot{x}= \phi^{-j}_1(\dot{z})$, for some $\dot{z} \in \dot{Z}_{\alpha_1}$ and $j \in \omega$. Hence, by inductive
hypothesis, we have $\force_{\algebra{B}_{\alpha_1}} \text{`` $\dot{z}$ is unreachable over $\model{N}^{\algebra{B}_{\alpha_0}}$''}$, and therefore, by corollary
\ref{corollary:Z}, we obtain
\[
\force_{\algebra{B}_{\alpha_1+1}} \text{`` $\dot{x}$ is unreachable over $\model{N}^{\algebra{B}_{\alpha_1}}$''};
\]
\item $\eta > 1$ and so $\gamma= \alpha_{\eta}$: in such a case we do not have to use corollary \ref{corollary:Z}, but fact \ref{lemma:preserve-unreachable} is sufficient; in fact, in this case, we do not have the ``intrusion'' of the two copies of the Silver$^+$ algebra in the amalgamation. More precisely, if $\dot x = \phi_{\eta}^{j}(\dot z)$ for some $\dot z \in Z_{\alpha_{\eta}}$, then one obtains
\[
\force_{\algebra{B}_{\alpha_{\eta}+1}} \text{`` $\dot{x}$ is unreachable over $\model{N}^{\algebra{B}_{\alpha_{\eta}}}$''},
\]
since by inductive hypothesis $\force_{\algebra{B}_{\alpha_{\eta}}} \text{`` $\dot{x}$ is unreachable over  $\model{N}^{\algebra{B}_{\alpha_{\eta-1}}}$ ''}.$
\end{itemize}
\end{proof}
By our previous comments, this concludes the proof to show $Z$ not being Lebesgue measurable.
\end{proof}
\begin{lemma} \label{lemma: YnotMiller}
Let $G$ be a $\algebra{B}_\kappa$-generic filter over
$\model{N}$. Then
\[
\model{N}[G] \models \text{`` $Y$ is not Miller measurable ''}.
\]
\end{lemma}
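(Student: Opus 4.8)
The plan is to mirror the structure of the proof of Lemma \ref{lemma: Znotlebesgue}, replacing random/Lebesgue with Miller, and to show that in $\model{N}[G]$ no Miller tree $S$ satisfies $[S] \subseteq Y$ or $[S] \cap Y = \emptyset$. Given a $\algebra{B}_\kappa$-name $\dot S$ for a Miller tree, I would first find $\alpha < \kappa$ such that $\dot S$ is a $\algebra{B}_\alpha$-name and $\algebra{B}_{\alpha+1} = \algebra{B}_\alpha * \dot{\miller}$ with $\dot Y_{\alpha+1} = \dot Y_\alpha \cup \{ \dot y_T : T \in \miller \}$. For this $S$ one can pick the name $\dot y_S$ for a Miller real through $S$, so that $\model{N}[G] \models \dot y_S^G \in [S] \cap Y$. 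This gives the ``$[S] \not\subseteq \cantor \setminus Y$'' half, i.e. $[S] \cap Y \neq \emptyset$.

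For the other half I would produce a branch of $[S]$ that lies \emph{outside} $Y$. As in Lemma \ref{lemma: Znotlebesgue}, I would find $\gamma < \kappa$ with $\gamma > \alpha$ such that $\dot S$ is a $\algebra{B}_\gamma$-name and $\algebra{B}_{\gamma+1} = \algebra{B}_\gamma * \textbf{Coll}(\omega,\gamma)$, where $\dot Y_{\gamma+1} = \dot Y_\gamma$. The Levy collapse adds a Miller real $\dot g$ through $S$ over $\model{N}[G \restric \gamma+1]$, so $\model{N}[G] \models \dot g^G \in [S] \setminus \dot Y_{\gamma+1}^G$. It then remains to argue that no later stage of the iteration puts $\dot g^G$ into $Y$, i.e. $\model{N}[G] \models \dot g^G \notin Y \setminus \dot Y_{\gamma+1}^G$.

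This last point is the crux and is the analogue of Lemma \ref{lemma:Z-unreachable}; here is where the argument genuinely differs from the Lebesgue case. I would prove, by induction on $\beta < \kappa$, that every $\dot x \in \dot Y_\beta \setminus \dot Y_\gamma$ is unreachable over $\model{N}[G \restric \gamma+1]$. The limit case and the amalgamation subcase (where $\dot x = \phi^{\pm j}_\eta(\dot z)$) go through exactly as in Lemma \ref{lemma:Z-unreachable}, using Fact \ref{lemma:preserve-unreachable} and Corollary \ref{corollary:Z} together with Lemma \ref{lemma:silver-preserves-unreachable}, since unreachability is preserved by both amalgamation and Silver forcing. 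The one new subcase is when $\dot Y_{\gamma'+1} = \dot Y_{\gamma'} \cup \{ \dot y_T : T \in \miller \}$: I must check that a Miller real is unreachable over the model at that stage. The main obstacle is precisely verifying this Miller analogue of Remark \ref{remark:random-unreachable}; I would argue that a Miller real $\dot y_T$, being eventually dominating (or at least avoiding the ground-model slaloms in $\Gamma$), satisfies $\forall \sigma \in \Gamma \cap \model{N}^{\algebra{B}_{\gamma'}} \exists n (h_{y_T}(n) \notin \sigma(n))$, using that each $\sigma(n)$ has at most $2^{kn}$ elements while the blocks $I_n$ have length $g(n)=2^n$, so a sufficiently fast-growing Miller real escapes. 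Once this induction is complete, $\dot g^G$ cannot coincide with any element of $Y \setminus \dot Y_{\gamma+1}^G$, since $\dot g^G$ itself is \emph{not} unreachable over $\model{N}[G \restric \gamma+1]$ — it is a generic real over precisely that model — whereas every member of $Y$ added after stage $\gamma$ is unreachable over it. Hence $\model{N}[G] \models \dot g^G \in [S] \setminus Y$, and combining the two halves shows $S$ does not witness Miller measurability of $Y$; as $\dot S$ was arbitrary, $Y$ is not Miller measurable.
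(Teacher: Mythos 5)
Your proposal follows the same skeleton as the paper's argument (which is itself presented as ``analogous to Lemma \ref{lemma: Znotlebesgue}''): Miller stages put a point of $Y$ into every $[S]$, a collapse stage supplies a branch $g\in[S]$ outside $Y_{\gamma+1}$, and an induction shows no element of $Y$ added later can equal $g$. The genuine gap is at the step you yourself identify as the crux: the claim that the Miller reals added at Miller stages are unreachable. Your justification --- that a Miller real is ``eventually dominating'', or escapes slaloms because it is ``sufficiently fast-growing'' --- is false resp.\ unproven: a Miller generic is unbounded over the ground model but \emph{not} dominating (between splitting nodes its values are dictated by the tree), and the counting $|\sigma(n)|\leq 2^{kn}$ versus $|I_n|=2^n$ is the measure/category argument of Remarks \ref{remark:random-unreachable} and \ref{remark:cohen-unreachable}; by itself it says nothing about a Miller generic. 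Moreover, the paper's closing remark asserts exactly the opposite of your claim (``Miller reals are unbounded but not unreachable''), and accordingly its proof of Lemma \ref{lemma: YnotMiller} does not reuse unreachability at all: it replaces it by \emph{unboundedness} throughout. One proves the analogue of Lemma \ref{lemma:Z-unreachable} with ``unbounded'' in place of ``unreachable'', using that Miller reals are unbounded over the ground model, that unboundedness passes through amalgamation (\cite{JR93}, lemmata 3.4 and 6.1, playing the role of Fact \ref{lemma:preserve-unreachable} and Corollary \ref{corollary:Z}), and that Silver forcing is $\baire$-bounding (playing the role of Lemma \ref{lemma:silver-preserves-unreachable}). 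Note also that the only honest way to repair your claim is to observe that a slalom capturing $y\in\baire$ pointwise yields a ground-model function dominating $y$ everywhere; so the property actually doing the work is unboundedness, i.e.\ you are pushed back to the paper's invariant anyway.

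A second, related flaw: at the end you argue $g\notin Y\setminus Y_{\gamma+1}$ because $g$ ``is not unreachable over $\model{N}[G\restric\gamma+1]$ --- it is a generic real over precisely that model''. This is backwards. Genericity over a model is precisely what makes a real unreachable or unbounded over it (that is your own base case, and Remark \ref{remark:random-unreachable}); if $g$ really were Miller-generic over $\model{N}[G\restric\gamma+1]$, your induction would apply to it and the separation would collapse. What separates $g$ from the later elements of $Y$ is membership, not genericity: $g$ is added by the collapse \emph{at} stage $\gamma+1$, so it is generic only over $\model{N}[G\restric\gamma]$ and belongs to $\model{N}[G\restric\gamma+1]$; being an element of that model it is trivially bounded over it (and trivially captured by a slalom of that model), whereas every element of $Y\setminus Y_{\gamma+1}$ is unbounded over it. With these two corrections --- unboundedness in place of unreachability, and membership rather than genericity as the reason $g$ escapes $Y$ --- your outline becomes the paper's proof.
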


\begin{proof}
The proof is analogous to the one just given for showing $Z$ not being Lebesgue measurable. Here, instead of using random reals, we use Miller reals, and instead of using the unreachability, we use the unboundedness. In fact an analogous of fact \ref{lemma:preserve-unreachable} and corollary \ref{corollary:Z} can be proven if one replaces the word ``unreachable'' with ``unbounded'' (see \cite{JR93}, lemma 3.4 and lemma 6.1). One can then continue with a similar proof, by using the fact that Miller reals are unbounded over the ground model and that Silver forcing is $\baire$-bounding.
\end{proof}

Hence, if one considers the inner model $\model{L}(\real, Y,Z)$ of $\model{N}[G]$, one obtains
\begin{eqnarray*}
\model{L}(\real,Y,Z)^{\model{N}[G]} &\models& \vm \land \neg \lm \land \neg \mm \land \\
											&& \forall x \in \baire (\omega_1^{\model{L}[x]} < \omega_1),
\end{eqnarray*}

\begin{remark}
Note that any comeager set contains the branches through a Miller tree, and therefore $\bp \Rightarrow \mm$, by lemma \ref{lemma:theta-measurability}. Hence, $\bp$ fails in our model, without displaying a concrete counterexample. On the contrary, note that our method does not permit to construct a unique set $Y$ which is simultaneously non-Miller measurable and non-Lebesgue measurable. In fact, on the one hand random reals are unreachable but not unbounded, whereas on the other hand Miller reals are unbounded but not unreachable.
\end{remark}

\textbf{Final acknowledgement.} I really would like to thank the anonymous referee for the suggestions which have definitely made the exposition much clearer. In particular, one of his/her observations has led to question 1 of the coming section.

\section{Concluding remarks and open questions} \label{conclusion}
We conclude with some questions which we consider noteworthy and for which further developments are expected.
\begin{itemize}
\item[(Q1)] It would be interesting to understand the behaviour of the inaccessible $\kappa$ if we do not explicitely collapse it along the construction. In the model presented by Shelah in \cite{Sh85}, if we start from $\model{L}$ as ground model and $\kappa$ being the least inaccessible, we know that $\kappa$ collapses to $\omega_1$; in fact, $\omega_1$ has to be inaccessible by reals, as the latter is implied by $\SSigma^1_3(\textsc{Lebesgue})$. Nevertheless, in our case, we know that projective Silver measurability has the consistency strength of ZFC, so we cannot use such an \emph{indirect} argument. We conjecture that the algebra $\algebra{B}_\kappa$ will anyway collapse the inaccessible to $\omega_1$, but we currently do not have a precise proof of that. 

\item[(Q2)] In his PhD dissertation \cite{S10} written under the supervision of Sy Friedman, David Schrittesser improved the amalgamation-method in order to get a projective version of Shelah's result, where all \emph{projective} sets are Lebesgue measurable and there exists a \emph{projective} set without Baire property. So a natural question is whether such a method can be useful to obtain the projective version of the separation between Silver-, Miller- and Lebesgue- measurability that we presented in this paper, and as usual requiring $\omega_1$ to be inaccessible by reals.
\item[(Q3)] The same method might be done to separate other two regularity properties: in our case, we used the fact that a random real is unreachable over the ground model, and that the unreachability is somehow preserved by amalgamation and Silver forcing. Obviously, in other cases, the trick will be to find the right property of the generic real still preserved by the amalgamation and simultaneously ``respected'' by the other forcing; for example, if we want to get $\sm \wedge \neg \vm$ one should find a particular feature of the Silver real which is preserved by amalgamation and Sacks forcing in the sense of fact \ref{lemma:preserve-unreachable} and corollary \ref{corollary:Z}.
\item[(Q4)]  The previous results and observations point out that the nature of the generic reals, and more generally of the forcing notions, is strictly related to the behaviour of the regularity properties associated. Hence, a more general and intriguing question could be to understand whether some specific relation between two tree-like forcings $\poset{P}$, $\poset{Q}$ reflects on the relation between $\textbf{all}(\poset{P}\textsc{-measurability})$ and $\textbf{all}(\poset{Q}\textsc{-measurability})$, where $\poset{P}$-measurability denotes the notion of regularity associated with $\poset{P}$ (see \cite{Ik10} and \cite{K12}, chapter 2). For example, we know that $\cohen \lessdot \dominating$ (where the latter is the Hechler forcing) and that $\textbf{all}(\textsc{Hechler}) \Rightarrow \bp$; Can one obtain a more general fact asserting that if $\poset{P} \lessdot \poset{Q}$ then $\textbf{all}(\poset{Q}\textsc{-measurability})$ implies $\textbf{all}(\poset{P}\textsc{-measurability})$?
\item[(Q5)] Since any comeager set contains the branches through a Miller tree, it follows that $\bp \Rightarrow \mm$. Then it comes rather natural to ask whether or not such an implication can be reversed. If we want to apply the method presented in this paper to give a negative response, we should find a property which is preserved via Miller extension, and satisfied by Cohen reals. Note that the unreachability cannot help for that, since Miller reals themselves are not unreachable.
\item[(Q6)] Starting from $\model{L}$ and using Shelah's machinery to obtain $\bp$ without inaccessible, we get a model where even $\DDelta^1_2(\textsc{Lebesgue})$ fails, since by sweetness no random reals are added. Furthermore, in such a model we obviously have $\omega_1^{\model{L}}=\omega_1$. What about a model for $\bp \land \neg \lm$ but in which $\omega_1$ is inaccessible by reals? The answer is far from trivial, since Shelah's method seems to have several difficulties in that case; in fact, one should find a property of the random real which is preserved by Cohen extension (and simultaneously by the amalgamation), which appears really hard to obtain. So it seems that a completely different method should be used, probably even another way to construct homogeneous algebras.

\end{itemize}

\addcontentsline{toc}{section}{Bibliography}

\end{document}